\theoremstyle{thmstyleone}%
\newtheorem{theorem}{Theorem}
\newtheorem{proposition}[theorem]{Proposition}%
\theoremstyle{thmstyletwo}%
\newtheorem{remark}{Remark}%
\theoremstyle{thmstylethree}%
\begin{document}

\title[L.I.G.S.J.S]{A K$\ddot{A}$HLERIAN APPROACHE TO  THE   SCHR$\ddot{O}$DINGER EQUATION IN SIEGEL-JACOBI SPACE OF THE LOGNORMAL DISTRIBUTION.}

\author*[1]{\fnm{} \sur{Prosper Rosaire Mama  Assandje }}\email{mamarosaire@yahoo.fr, mamarosaire@facsciences-uy1.cm}
\author[2]{\fnm{} \sur{Joseph Dongho }}\email{josephdongho@yahoo.fr}
\equalcont{These authors contributed equally to this work.}

\author[3]{\fnm{} \sur{ Thomas Bouetou Bouetou}}\email{tbouetou.@gmail.com}
\equalcont{These authors contributed equally to this work.}

 \affil*[1]{\orgdiv{Department of Mathematics}, \orgname{University of Yaounde1},
\orgaddress{\street{usrectorat@.univ-yaounde1.cm}, \city{Yaounde},
\postcode{337}, \state{Center}, \country{Cameroon}}}

\affil[2]{\orgdiv{Department of Mathematics and Computer Science},
\orgname{University of Maroua},
\orgaddress{\street{decanat@fs.univ-maroua.cm}, \city{Maroua},
\postcode{814}, \state{Far -North}, \country{Cameroon}}}


\affil[3]{\orgdiv{Computer Engineering Department.}, \orgname{Higher
national school of Polytechnic},
\orgaddress{\street{thomas.bouetou@polytechnique.cm}, \city{Yaounde
I}, \postcode{8390}, \state{Center}, \country{Cameroon}}}

\abstract{In this paper, we describe the evolution of spectral
curves in the Siegel-Jacobi space through the Schr$\ddot{o}$dinger
equation constructed from a K$\ddot{a}$hler geometry induced on the
lognormal statistical manifold via Dombrowski's construction. We
introduce new holomorphic structures and show that the Hamiltonian
vector field coincides with the fundamental vector field generated
by holomorphic isometries. We construct the time-dependent
Schr$\ddot{o}$dinger equation from this geometric setting and show
that the associated energy is not constant, but varies with time.
This work establishes a bridge between K$\ddot{a}$hler geometry,
statistical models, and the formalism of quantum mechanics.}

\keywords{Statistical manifold, Schr$\ddot{o}$dinger equation,
K$\ddot{a}$hler structure, Siegel-Jacobi space , Dombrowski
construction.}



\maketitle

\section{Introduction}\label{sec1}
The Schr$\ddot{o}$dinger equation stands as a cornerstone of quantum
mechanics, governing the time evolution of physical systems at the
microscopic scale\cite{erw}. In recent decades, a growing body of
work has suggested deep connections between this quantum framework
and differential geometry - particularly K$\ddot{a}$hler geometry,
which naturally arises from the geometric structure of statistical
models. In \cite{molitor1,molitor2,molitor3} it has recently been
suggested that the quantum formalism could be based on
K$\ddot{a}$hler geometry,  which follows naturally from statistics.
In this work, we aim to explore the dynamics of spectral curves
associated with the lognormal statistical manifold, using the
Dombrowski construction to endow its tangent bundle with a
K$\ddot{a}$hler structure. The geometric setting of our analysis is
the Siegel-Jacobi space, which elegantly combines symplectic and
complex features and plays a prominent role in the geometric
formulation of quantum mechanics. This framework leads us to the
following central question: How does the K$\ddot{a}$hler geometry
derived from a Dombrowski construction on the lognormal statistical
manifold enable the formulation of a Schr$\ddot{o}$dinger equation
in the Siegel-Jacobi space, and what are the geometric and dynamical
consequences for the evolution of the associated spectral curves? To
address this, we investigate several subproblems:  How can a
K$\ddot{a}$hler structure be constructed on the lognormal manifold
via the Dombrowski formalism?  What is the nature of the holomorphic
and isometric functions that preserve this structure?  How can the
evolution of spectral curves in the Siegel-Jacobi space be governed
by a Schr$\ddot{o}$dinger-type equation?  Is the resulting energy
function time-independent, or does it reveal a deeper dynamical
behavior? Work in this field goes back to Bohr, who devised a
radical model of the atom with electrons orbiting a nucleus. He
established that electrons can only be in special orbits. All other
orbits are not possible. He shows that electrons do not radiate
energy as long as they remain in their defined or stationary orbit.
It shows that energy radiation in the form of quanta only takes
place when an electron jumps from a higher-energy authorised orbit
to another lower-energy authorised orbit. In $1926$,
Schr$\ddot{o}$dinger\cite{erw} became interested in De Broglie's
matter wave. He studied the type of equation obeyed by this De
Broglie wave and developed a wave equation, now called the
fundamental equation of quantum mechanics, which governs the
microscopic world. Many authors have stressed the importance of
K$\ddot{a}$hler geometry in relation to the quantum formalism
\cite{molitor1,molitor2,molitor3,cire1,cire2,hes}. In
\cite{ash,dom},it is known that a quantum system, with Hilbert space
$\mathbb{C}^{n}$ , can be entirely described by means of the
K$\ddot{a}$hler structure of $\mathbb{P}(\mathbb{C}^{n})$; this is
the so-called geometrical formulation of quantum mechanics. On the
bases mentioned by Mathieu Molitor in \cite{dom}, the quantum
formalism has a link with the geometric theory of information. In
this case, the formalism should be rewritten and the role of the
above statistical-K$\ddot{a}$hler geometry should be fully
clarified. Mathieu Molitor in \cite{dom}, sets out two hypotheses in
his paper: Firstly, the quantum formalism has indeed an
information-theoretical origin. In this case, the formalism should
be rewritten and the role of the above statistical-K$\ddot{a}$hler
geometry should be fully clarified. See \cite{dom, chir, cli}.
Secondly, quantum mechanics cannot be derived from
information-theoretical principles. In this case, one should still
explain the relationship between the above definition of $Spec(f)$,
which is a priori independent of representation theory, and the
definition of the spectrum of an operator. It may well be that there
is some geometrical content hidden behind the main results of
functional analysis that goes beyond the well-known correspondence
between the space of K$\ddot{a}$hler functions of the complex
projective space and the space of Hermitian operators. It is in this
order that we show that, given $S$ be the statistical manifold
associated with the lognormal
 distribution, endowed with the Fisher information metric $h$.
  Consider the tangent bundle $TS$ and let $T(TS)$ be its tangent bundle.
   The Dombrowski construction applied to $(S, h)$ induces an almost
    Hermitian structure $(g, J, \omega)$ on $TS$, where:
 $g$ is the natural extension of $h$ to $TS$,
  $\omega$ is the fundamental 2-form associated to $g$ and $J$,
     $J$ is defined by $J(A, B, C) = (A, -C, B)$, for any $A(x), B(x), C(x) \in T_{p}S$.
Then $(TS, g, J, \omega)$ is a K$\ddot{a}$hler manifold. We show
that: In the coordinates system
$\left(\theta_{1},\theta_{2},\dot{\theta}_{1},\dot{\theta}_{2}\right)$
the K$\ddot{a}$hler structure $(g, J, \omega)$ is given by
\begin{equation*}
    g=\left(
        \begin{array}{cccc}
          -\frac{1}{2\theta_{2}} & \frac{\theta_{1}}{2\theta^{2}_{2}} & 0 & 0 \\
          \frac{\theta_{1}}{2\theta^{2}_{2}} & -\frac{\theta^{2}_{1}-\theta_{2}}{2\theta^{3}_{2}} & 0 & 0 \\
          0 & 0 &  -\frac{1}{2\theta_{2}} &  \frac{\theta_{1}}{2\theta^{2}_{2}} \\
0& 0 &  \frac{\theta_{1}}{2\theta^{2}_{2}} & -\frac{\theta^{2}_{1}-\theta_{2}}{2\theta^{3}_{2}} \\
        \end{array}
    \right),\; J=\left(
        \begin{array}{cccc}
         0&0 & -1 & 0 \\
          0 &0 & 0 & -1 \\
          1 & 0 &  0 &  0 \\
0& 1 & 0 &0 \\
        \end{array}
    \right),\end{equation*}\; \begin{equation*}\omega=\left(
        \begin{array}{cccc}
         0&0 & -\frac{1}{2\theta_{2}}& \frac{\theta_{1}}{2\theta^{2}_{2}} \\
          0 &0 & \frac{\theta_{1}}{2\theta^{2}_{2}} & -\frac{\theta^{2}_{1}-\theta_{2}}{2\theta^{3}_{2}} \\
          \frac{1}{2\theta_{2}} & -\frac{\theta_{1}}{2\theta^{2}_{2}} &  0 &  0 \\
-\frac{\theta_{1}}{2\theta^{2}_{2}}& \frac{\theta^{2}_{1}-\theta_{2}}{2\theta^{3}_{2}} & 0 &0 \\
        \end{array}
    \right).
\end{equation*}
     In the coordinates system
$\left(\eta_{1},\eta_{2},\dot{\theta}_{1},\dot{\theta}_{2}\right)$,
the K$\ddot{a}$hler structure $(g, J, \omega)$ is  given by
\begin{equation*}
    g=\left(
        \begin{array}{cccc}
          2\theta_{1}^{2}-2\theta_{2}  & 2\theta_{1}\theta_{2} & 0 & 0 \\
          2\theta_{1}\theta_{2} & 2\theta_{2}^{2} & 0 & 0 \\
          0 & 0 &  -\frac{1}{2\theta_{2}} &  \frac{\theta_{1}}{2\theta^{2}_{2}} \\
0& 0 &  \frac{\theta_{1}}{2\theta^{2}_{2}} & -\frac{\theta^{2}_{1}-\theta_{2}}{2\theta^{3}_{2}} \\
        \end{array}
    \right),\; J=\left(
        \begin{array}{cccc}
         0&0 & \frac{1}{2\theta_{2}} & -\frac{\theta_{1}}{2\theta^{2}_{2}} \\
          0 &0 & -\frac{\theta_{1}}{2\theta^{2}_{2}} & \frac{\theta^{2}_{1}-\theta_{2}}{2\theta^{3}_{2}} \\
          2\theta_{1}^{2}-2\theta_{2}  & 2\theta_{1}\theta_{2} &  0 &  0 \\
2\theta_{1}\theta_{2}& 2\theta_{2}^{2}  & 0 &0 \\
        \end{array}
    \right),\end{equation*}\; \begin{equation*}\omega=\left(
        \begin{array}{cccc}
         0&0 & 1 & 0 \\
          0 &0 & 0 & 1 \\
          -1 & 0 &  0 &  0 \\
0& -1 & 0 &0 \\
        \end{array}
    \right)
\end{equation*}
In the same we show that, the Christoffel's coefficients on the
k$\ddot{a}$hler manifold define by the lognormal distribution is
given by\begin{eqnarray*}
 \left(\Gamma^{g}_{11} \right)^{1} &=& \left(\Gamma^{g}_{11} \right)^{2}
 =\left(\Gamma^{g}_{12} \right)^{1}=\left(\Gamma^{g}_{12} \right)^{2}=\left(\Gamma^{g}_{21} \right)^{1}
  =\left(\Gamma^{g}_{21} \right)^{2}=\left(\Gamma^{g}_{22} \right)^{1}=\left(\Gamma^{g}_{22}
  \right)^{2}=0.
\end{eqnarray*} with $\left(\Gamma^{g}_{ij} \right)^{k}$ is the
Christoffel's coefficient for the K$\ddot{a}$hler metric. We show
that  all holomorphic isometric function  $\varphi$ on $TS$ whose
elements satisfy $\varphi^{*}g=g$, and $\varphi_{*}J=J\varphi_{*}$
is given by
\[\displaystyle {\it \varphi(\theta,\dot{\theta})}\, := \, \left( \theta_{{1}}+{\it k_{1}},\theta_{{2}}+{\it k_{2}},\dot{\theta}_{{1}}+{\it k_{3}},\dot{\theta}_{{2}}+{\it k_{4}}\\
\mbox{} \right) \] with $k_{1},\;k_{2},\;k_{3},\;k_{4}\in\mathrm{I\!
R}$ and $\varphi: TS\longrightarrow TS$ be a diffeomorphism. We show
that, For all  $L\in \left\{F,G,H,H,P,Q\in\mathfrak{g}^{J}\right\}$
the Hamiltonian vector field $X_{J}L$ on Siegel-Jacobi space is
given by
\begin{eqnarray*}
  X_{\psi(F)}(\theta,\dot{\theta})&=&\left(0,0,0,0\right),\;\;\;\;\; X_{\psi(P)}(\theta,\dot{\theta})=\left(2\theta_{1}^{2}-2\theta_{2},2\theta_{1}\theta_{2},-2\theta_{1}\theta_{2},-2\theta_{2}^{2}\right)\\
  X_{\psi(G)}(\theta,\dot{\theta})&=&\left(2\theta_{1}^{2}-2\theta_{2} +2\theta_{1}\theta_{2}\dot{\theta}_{2},2\theta_{1}\theta_{2}+2\theta_{2}^{2}\dot{\theta}_{2},-2\theta_{1}\theta_{2}(\theta_{2}+1),-2\theta_{2}^{2}(\theta_{2}+1)\right),\\
  X_{\psi(H)}(\theta,\dot{\theta})&=&\left(0,0,-2\theta_{1}^{2}+2\theta_{2} ,-2\theta_{1}\theta_{2}\right),\;\;\;\;\;X_{\psi(R)}(\theta,\dot{\theta})=\left(0,0,0,0\right)\\
  X_{\psi(Q)}(\theta,\dot{\theta})&=&\left(0,0,-2\theta_{1}\theta_{2},-2\theta_{2}^{2} \right)
\end{eqnarray*}
 We show that there exist $\alpha:I \longrightarrow \mathbb{H}\times
\mathbb{C}=\mathbb{S}^{J},\; \alpha(t)\longmapsto(-iz_{2},\;z_{1})$
an integral curve of the Hamiltonian vector field $X_{J}L$, and
constant application
$K_{L}\Phi(t):=\kappa\left(\alpha(t),L\right)=cste$, with
$t=(z_{1},\;z_{2}), \; I=\mathbb{C}\times i\mathbb{H}$, and satisfy
the following equation on Jacobi space $\mathbb{S}^{J}$ by given
\begin{eqnarray*}
    \frac{id
\Phi(t)}{dt}&=&\frac{1}{2}\left(-\lambda_{1}\log^{2}(x)-\lambda_{2}\left(2iz_{1}+\left(-iz_{2}+2iz_{1}\log(x)\right)^{2}\right)\right)\Phi(t)\nonumber\\
&+&\frac{1}{2}\left(\lambda_{3}\left(2\log(x)z_{2}-4z_{1}\log^{2}(x)+i\right)+\lambda_{4}\left(-z_{2}+2z_{1}\log(x)\right)\Phi(t)\right)\\
&+&\frac{1}{2}\left(\xi(\theta,\dot{\theta},x)\log(x)-\frac{1}{4}\lambda_{5}\right)\Phi(t)+\frac{1}{2}\left(\lambda_{1}K_{F}\Phi(t)
+\lambda_{2}K_{G}\Phi(t)+\lambda_{3}K_{H}\Phi(t)\right.\nonumber\\\\
&+&\left.\lambda_{4}K_{P}\Phi(t)+\xi(\theta,\dot{\theta},x)K_{Q}\Phi(t)+\lambda_{5}K_{R}\Phi(t)\right)\nonumber\\
\end{eqnarray*}
We show that if the unique application
$K_{L}\Phi(t):=\kappa\left(\alpha(t),L\right)$ is zero, then the
Hamiltonian equation  on Jacobi space $\mathbb{S}^{J}$ is
 given by
\begin{eqnarray*}
    \frac{id
\Phi(t)}{dt}&=&\frac{1}{2}\mathcal{H}(\theta,\dot{\theta},x)\Phi(t)\end{eqnarray*}
with
\begin{eqnarray*}
    \mathcal{H}(\theta,\dot{\theta},x)&=&-\lambda_{1}\log^{2}(x)-\lambda_{2}\left(2iz_{1}
    +\left(-iz_{2}+2iz_{1}\log(x)\right)^{2}\right)\\
&+&\lambda_{3}\left(2\log(x)z_{2}-4z_{1}\log^{2}(x)+i\right)+\lambda_{4}\left(-z_{2}+2z_{1}\log(x)\right)\\
&+&\xi(\theta,\dot{\theta},x)\log(x)-\frac{1}{4}\lambda_{5}.
\end{eqnarray*}
We show that, the  Shr$\ddot{o}$dinger equation on Jacobi space
$\mathbb{S}^{J}$ if only if $\xi(\theta,\dot{\theta},x)=\gamma, \;
\gamma\in \mathrm{I\! R}$, and we have \begin{eqnarray*}
    \frac{id
\Phi(t)}{dt}&=&\frac{1}{2}Q(L)\Phi(t)+\frac{1}{2}K(L)\Phi(t).
\end{eqnarray*}, and furthermore, if $K_{L}\Phi(t):=\kappa\left(\alpha(t),L\right)$ is
zero, then the  Shr$\ddot{o}$dinger equation becomes
\begin{eqnarray*}
    \frac{id
\Phi(t)}{dt}&=&\frac{1}{2}\mathcal{H}\Phi(t)\end{eqnarray*} with
$\mathcal{H}=\frac{1}{2}Q(L)$, and the Hamiltonian is given by
\begin{eqnarray*}
    \mathcal{H}(\theta,\dot{\theta},x)&=&-\lambda_{1}\log^{2}(x)-\lambda_{2}\left(2iz_{1}
    +\left(-iz_{2}+2iz_{1}\log(x)\right)^{2}\right)\\
&+&\lambda_{3}\left(2\log(x)z_{2}-4z_{1}\log^{2}(x)+i\right)+\lambda_{4}\left(-z_{2}+2z_{1}\log(x)\right)\\
&+&\gamma\log(x)-\frac{1}{4}\lambda_{5}.
\end{eqnarray*}
After the introduction, the first section recall the preliminaries
motion on theory of statistical manifold, in section $2$ we
 present the Dombrowski's construction on Amari's $\ell$-representation on lognormal
 manifold, in section $3$, we construct the  K$\ddot{a}$hler function, and holomorphic isometric function. At the
 end, in section $4$,we present Shr$\ddot{o}$dinger equation on Jacobi space.

\section{Preliminaries}\label{sec2}
Let $S = \left\{p_{ \theta}(x),\left.
                        \begin{array}{ll}
                         \theta\in \Theta & \hbox{} \\
                          x\in \mathcal{X} & \hbox{}
                        \end{array}
                      \right.
\right\}$ be the set of probabilities $p_{ \theta}$, parameterized
by $ \Theta$, open subset of $\mathrm{I\! R}^{n}$; on the sample
space $\mathcal{X}\subseteq\mathrm{I\! R}$. Let
$\mathcal{F}{(\mathcal{X},\mathrm{I\! R})}$ be the space of
real-value smooth functions on $\mathcal{X}$. According to Ovidiu
\cite{ovidiu}, the log-likelihood function is a mapping defined by:
\begin{eqnarray}
l:S&\longrightarrow& \mathcal{F}{(\mathcal{X},\mathrm{I\! R})}\nonumber\\
p_{ \theta} &\longmapsto&  l\left(p_{ \theta}\right)(x) = \log
p_{\theta}(x).\nonumber
\end{eqnarray}
Sometimes, for convenient reasons, this will be denoted by
$l(x,\theta)=l\left(p_{ \theta}\right)(x)$. Let
$\theta=\left(\theta_{1},\dots,\theta_{n}\right)$ be a coordinate
system on $S$. Let $h$ be a Riemannian structure on $S$. Any $X,Y\in
\mathfrak{X}(S)$ induce a  $C^{\infty}(S)$-linear form $h_{X}$ on
$\mathfrak{X}(S)$ defined by $h_{X}(Y)=h(X,Y)$. That is for all
$X\in\mathfrak{X}(S)$, $h_{X}\in \Omega(S) $. Where
$\mathfrak{X}(S)$ is the module of vector fields over  $S$.
Therefore, each Riemannian structure $h$ on $S$ induces an
isomorphism
\begin{eqnarray*}
b_{h}:\mathfrak{X}(S)&\tilde{\longrightarrow}&
\Omega(S)\\
X&\longmapsto& h_{X}.
\end{eqnarray*}
 Let
 $\Phi\in C^{\infty}(S)$, we have
 $d\Phi\in \Omega(S)$ and there exists $X_{\Phi}\in \mathfrak{X}(S)$ such that $b_{h}(X_{\Phi})=d\Phi$.
As a linear mapping, $b_{h}$ has a matrix $[b_h]$  with respect to
the basis pair $\left(\partial_{\theta_{i}},\; d\theta_{i}\right)$
and we know that $(h_{ij})_{1\leq i,j\leq n}=[b_h]$. According to
Amari \cite{Shu}, we have \begin{equation} [b_h]=(h_{ij})_{1\leq
i;j\leq
n}=\left(-\mathbb{E}[\partial_{\theta_{i}}\partial_{\theta_{j}}l(x,\theta)]\right).\label{eq1}
\end{equation}

\subsection{Affine connection and statistical manifold}\label{subsec1}
The collection of all $T_{p}S$ defined a $2n-$dimensional manifold
$T(S)$ called Tangent bundle of $S$. More explicitly, $T(S)$ is
completely defined by the projection mapping $\pi:TS\longrightarrow
S$ such that $\pi^{-1}(\{p\}) \cong\mathrm{I\! R}^{n}$ and $\pi$
have the usual local trivialization property. $A:S\longrightarrow
TS$ is a vector field on $S$ if
 $\pi\circ A=1_{S}$.
 The collection $\mathfrak{X}(S)$ of all vector fields on $S$ is an $C^{\infty}(S)-$module. An affine connection is
an $\mathrm{I\! R}$-bilinear mapping\\
$ \nabla:\mathfrak{X}(S)\longrightarrow\mathfrak{X}(S):
(A,B)\mapsto\nabla_{A}B $
such that:\\
 $\nabla_{ A} (fB )= (Af) (B)  + f\nabla_{ A} (B),\;
 \nabla_{ fA} (B )= f\nabla_{ A} (B).$\\
In local coordinate
$\theta=\left(\theta_{1},\dots,\theta_{n}\right)$ of $S$ , $\nabla_{
\partial_{\theta_{i}}} (\partial_{\theta_{j}}
)=\sum\Gamma_{ij}^{k}\partial_{\theta_{i}}$ and $\Gamma_{ij}^{k}$ is
Christoffel coefficient given by: $ \Gamma_{ij}^{k}=
\frac{1}{2}h^{km}[\partial_{\theta_{i}}.h_{jm}+
\partial_{\theta_{j}}.h_{ik} - \partial_{\theta_{k}}.h_{ij}]$.\\
In \cite{lauri}, a statistical manifold $(S,h,\nabla)$ to be a
manifold $S$ equipped with metric $h$ and connection $\nabla$ such
that
\begin{enumerate}
    \item [i)]$\nabla$ is torsion-free
    \item [ii)]$\nabla_{h}\equiv C$ is totally symmetric
\end{enumerate}
 Equivalently, a manifold has a statistical structure
when the conjugate (with respect to $g$) $\nabla^{*}$ of a
torsion-free connection $\nabla$ is also torsion-free.
\subsection{Dombrowski's construction.}\label{subsec2}
According to Mathieu Molitor \cite{dom},
 it is denote by $\pi : TS \longrightarrow S$ the canonical projection and
by $K$ the connector associated to $\nabla$. Recall that $K$ is the
unique map $T (TS) \longrightarrow TS$ satisfying  $\nabla_{X}Y =
KY\ast X$ for all vector fields $X, Y$ on $S$ (here $Y\ast X$
denotes the derivative of $Y$ in the direction of $X$). Given $u_{p}
\in T_{p}S$, the subspaces
\begin{equation}\label{eq1}
    \left\{
                              \begin{array}{ll}
                                Hor(TS)u_{p} := \left\{ Z \in T_{u_{p}} (TS) / KZ =
0\right\}
 & \hbox{} \\
                                Ver(TS)u_{p}  := \left\{ Z \in T_{u_{p}} (TS) / \pi\ast_{u_{p}}Z =
0\right\}& \hbox{}
                              \end{array}
                            \right.
\end{equation}
 are respectively called the space of horizontal tangent
vectors and the space of vertical tangent vectors of $TS$ at
$u_{p}$. They are both isomorphic to $T_{p}S$ in a natural way, and
led to the following decomposition:
\begin{equation}\label{eq2}
    T_{u_{p}} (TS) \cong Hor(TS)u_{p}\oplus  Hor(TS)u_{p}\cong T_{p}S
\oplus T_{p}S.
\end{equation}
  More generally, $\nabla$  determines an isomorphism of vector
bundles over $S$:
\begin{equation}\label{eq3}
    T (TS) \cong TS \oplus TS \oplus TS,
\end{equation} the isomorphism being

\begin{equation}\label{eq4}
    T_{u_{p}} (TS)\ni A_{u_{p}}\mapsto \left(u_{p},\pi\ast_{u_{p}}\ast
A_{u_{p}},KA_{u_{p}}\right).
\end{equation}
 If there is no danger of confusion, we shall thus regard an
element of $T_{u_{p}} (TS)$ as a triple
$\left(u_{p},v_{p},w_{p}\right) $, where $u_{p},v_{p},w_{p}\in
T_{p}S$. The second component $v_{p}$ is usually referred to as the
horizontal component with respect to $\nabla$ and $w_{p}$ the
vertical component. Let $h$ be a Riemannian metric on $S$. Together
with $\nabla$, the couple $(h,\nabla)$ determines an almost
Hermitian structure on $TS$ we have the following formulas:
\begin{equation}\label{eq5}
    \left\{
      \begin{array}{ll}
        g_{u_{p}}\left(\left(u_{p},v_{p},w_{p}\right),\left(u_{p},\bar{v}_{p},\bar{w}_{p}\right)\right):=h_{p}\left(v_{p},\bar{v}_{p}\right)+h_{p}\left(w_{p},\bar{w}_{p}\right) & \hbox{(metric)} \\
        \omega_{u_{p}}\left(\left(u_{p},v_{p},w_{p}\right),\left(u_{p},\bar{v}_{p},\bar{w}_{p}\right)\right):=h_{p}\left(v_{p},\bar{w}_{p}\right)-h_{p}\left(w_{p},\bar{v}_{p}\right) & \hbox{(2-form)} \\
        J_{u_{p}}\left(u_{p},v_{p},w_{p}\right):=\left(u_{p},-w_{p},v_{p}\right),\; (a) & \hbox{(almost complex structure)}
      \end{array}
    \right.
\end{equation}
where $u_{p},v_{p},w_{p},\bar{v}_{p},\bar{w}_{p}\in T_{p}S$.
Clearly, $J^{2} = -Id$ and $g(J . , J . ) = g( . , . )$, which means
that $(TS, g, J)$ is an almost Hermitian manifold, and one readily
sees that $g,\; J$ and $w$ are compatible, i.e., that $\omega=
g\left(  J . , . \right)$ . The 2-form $\omega$ is thus the
fundamental 2-form of the almost Hermitian manifold $(TS, g, J)$:
This is Dombrowski's construction.\\Let $(h,\nabla,\nabla^{*})$ be a
dualistic structure on $S$ and $(g, J, \omega)$ the almost Hermitian
structure on $TS$ associated to $(h,\nabla)$ according to
Dombrowski's construction. According to Mathieu Molitor \cite{dom},
\begin{equation}\label{eq6}
    (TS, g, J, \omega)\; \textrm{is K$\ddot{a}$hler}\; \Leftrightarrow \;(S,
h,\nabla,\nabla^{*}) \;\textrm{is dually flat}.
\end{equation}
The same,
\begin{enumerate}
    \item In the coordinates $(\theta_{i}, \dot{\theta}_{i})$, we
    have the following structure
    \begin{equation}\label{eq7}
        g=\left(
            \begin{array}{cc}
              h_{ij} & 0 \\
            0 & h_{ij}\\
            \end{array} \right),\; J=\left(
            \begin{array}{cc}
              0 & I \\
 I & 0\\
            \end{array} \right),\;\omega=\left(
            \begin{array}{cc}
              0 &  h_{ij} \\
 - h_{ij} & 0 \\
            \end{array}\right),\; i,j\in\{1,\dots,n\}.
    \end{equation}
    \item In the coordinates $(\eta_{i}, \dot{\theta}_{i})$, we
    have the following structure
\begin{equation}\label{eq06}
        g=\left(
            \begin{array}{cc}
              h^{ij} & 0 \\
 0 & h_{ij} \\
            \end{array}\right),\; J=\left(
            \begin{array}{cc}
              0 & -h_{ij}  \\
 h^{ij}  & 0 \\
            \end{array}\right),\;\omega=\left(
            \begin{array}{cc}
              0 & I \\
 - I & 0 \\
            \end{array}\right),\; i,j\in\{1,\dots,n\}.
    \end{equation}
\end{enumerate}
 Let $f :
TS\longrightarrow \mathrm{I\! R}$ be a smooth function. Given an
affine coordinate system $x: U\longrightarrow \mathrm{I\! R}^{n}$
with respect to $\nabla$ on $S$, we have the following equivalence:
$f$ is K$\ddot{a}$hler on $\pi^{-1}(U)$ if and only if
\begin{equation}\label{eq8}
    \left\{
      \begin{array}{ll}
        \frac{\partial^{2}f}{\partial x_{i}\partial x_{j}}-
         \frac{\partial^{2}f}{\partial \dot{x}_{i}\partial \dot{x}_{j}}= 2\sum_{b=1}^{n}\left(\Gamma^{h}_{ij} \right)^{b}\circ \pi\frac{\partial f}{\partial x_{b}}& \hbox{;} \\
        \frac{\partial^{2}f}{\partial x_{i}\partial \dot{x}_{j}}-
         \frac{\partial^{2}f}{\partial x_{j}\partial \dot{x}_{i}}= 2\sum_{b=1}^{n}\left(\Gamma^{h}_{ij} \right)^{b}\circ \pi\frac{\partial f}{\partial \dot{x}_{b}}& \hbox{;}
      \end{array}
    \right.
\end{equation}
\begin{remark}\cite{dom}\label{remark10} Let $\varphi: TS\longrightarrow TS$ be a
diffeomorphism. In the coordinates system $(\theta, \dot{\theta})$
we can be written $\varphi(\theta, \dot{\theta})=
\left(\varphi^{1}(\theta, \dot{\theta}),\varphi^{2}(\theta,
\dot{\theta}),\varphi^{3}(\theta, \dot{\theta}),\varphi^{4}(\theta,
\dot{\theta})\right)$ ,
\begin{eqnarray*}
  \varphi\; \textrm{is holomorphic} &\Leftrightarrow&
  \varphi^{1}+i\varphi^{3}\;
  \textrm{and}\;
 \varphi^{2}+i\varphi^{4}\; \textrm{are holomorphic functions} \\
  &\Leftrightarrow& \frac{\partial}{\partial
  \bar{z}_{k}}\left(\varphi^{1}+i\varphi^{3}\right)=\frac{\partial}{\partial
  \bar{z}_{k}}\left(\varphi^{2}+i\varphi^{4}\right)=0,
  k\in\{0,\dots,n\}
\end{eqnarray*}
where $\varphi^{2}<0,\; \frac{\partial}{\partial
  \bar{z}_{k}}=\frac{1}{2}\left\{\frac{\partial}{\partial
  \theta_{k}}+i\frac{\partial}{\partial
  \dot{\theta}_{k}}\right\}$.\\

According to Mathieu Molitor \cite{dom},  if $\varphi$ is an
isometry, then
\begin{eqnarray}\label{lem4}
  \frac{\partial\varphi^{2}}{\partial
  \theta_{1}} &=& \frac{\partial\varphi^{2}}{\partial
  \dot{\theta}_{1}},\; \left(\frac{\partial\varphi^{2}}{\partial
  \theta_{2}}\right)^{2}+\left(\frac{\partial\varphi^{4}}{\partial
  \theta_{2}}\right)^{2}=\left(\frac{\partial\varphi^{2}}{\partial
  \theta_{2}}\right)^{2}\\
  \frac{\partial\varphi^{4}}{\partial
  \theta_{1}} &=& \frac{\partial\varphi^{4}}{\partial
  \dot{\theta}_{1}},\; \left(\frac{\partial\varphi^{2}}{\partial
  \dot{\theta_{2}}}\right)^{2}+\left(\frac{\partial\varphi^{4}}{\partial
  \dot{\theta}_{2}}\right)^{2}=\left(\frac{\partial\varphi^{2}}{\partial
  \theta_{2}}\right)^{2}\nonumber
\end{eqnarray}
\end{remark}

\section{ Dombrowski's construction on Amari's $\ell$-representation on lognormal manifold.}\label{sec3}
Let $S = \left\{p_{ \theta}(x),\left.
                        \begin{array}{ll}
                         \theta\in \Theta & \hbox{} \\
                          x\in \mathcal{X} & \hbox{}
                        \end{array}
                      \right.
\right\}$ be the set of probabilities $p_{ \theta}$, parameterized
by $ \Theta$, open subset of $\mathbb{R}^{n}$; on the sample space
$\mathcal{X}\subseteq\mathbb{R}$.
\begin{proposition}\label{proaa}
 Let $\left(S,h,\nabla\right)$ the statistical manifold endowed with the Fisher
information metric $h$. Let $T_{p}S$ the tangent vector on $S$ at
$p$. Consider the tangent bundle $TS$ and let $T(TS)$ be its tangent
bundle. The Dombrowski construction applied to $(S, h)$ induces an
almost Hermitian structure $(g, J, \omega)$ on $TS$, where:
\begin{itemize}
    \item $g$ is the natural extension of $h$ to $TS$,
    \item $\omega$ is the fundamental 2-form associated to $g$ and $J$,
    \item $J$ is defined by $J(u_{p}, v, w) = (u_{p}, -w, v)$, for any $u_{p}, v, w \in T_{p}S$ with $v\in Hor(TS)_{u_{p}}$,
    and $w\in Ver(TS)_{u_{p}}$.
\end{itemize} Then $(TS, g, J, \omega)$ is a K$\ddot{a}$hler
manifold.
 \end{proposition}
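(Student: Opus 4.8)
The plan is not to attack the integrability of $J$ and the closedness of $\omega$ head-on, but to reduce the whole statement to the dual-flatness criterion \eqref{eq6}: by Molitor's theorem \cite{dom}, the almost Hermitian manifold $(TS,g,J,\omega)$ produced by Dombrowski's construction from the pair $(h,\nabla)$ is K\"ahler if and only if the statistical manifold $(S,h,\nabla,\nabla^{*})$ is dually flat, where $\nabla$ is the $\ell$-representation (exponential) connection of Section~\ref{sec3} and $\nabla^{*}$ its $h$-dual. Compatibility of $g$, $J$, $\omega$ and the relation $J^{2}=-\mathrm{Id}$ are already contained in the Dombrowski construction \eqref{eq5}, so all that remains is to show that the lognormal family, with its Fisher metric and $\pm$ Amari connections, is dually flat; the observation that makes this immediate is that the lognormal distribution forms an exponential family.

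\textbf{Step 1 --- the exponential structure and the metric.} First I would put the lognormal density
\[ p_{(\mu,\sigma^{2})}(x)=\frac{1}{x\sigma\sqrt{2\pi}}\exp\!\left(-\frac{(\log x-\mu)^{2}}{2\sigma^{2}}\right),\qquad x>0, \]
into canonical form $\log p_{\theta}(x)=\theta_{1}t_{1}(x)+\theta_{2}t_{2}(x)-\psi(\theta)-\log x$ with sufficient statistics $t_{1}(x)=\log x$, $t_{2}(x)=\log^{2}x$, natural parameters $\theta_{1}=\mu/\sigma^{2}$, $\theta_{2}=-1/(2\sigma^{2})$, so that $\Theta=\mathbb{R}\times(-\infty,0)$, and cumulant function $\psi(\theta)=-\theta_{1}^{2}/(4\theta_{2})-\tfrac{1}{2}\log(-2\theta_{2})+\tfrac{1}{2}\log(2\pi)$. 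Since for an exponential family $h_{ij}=-\mathbb{E}[\partial_{\theta_{i}}\partial_{\theta_{j}}l]=\partial_{\theta_{i}}\partial_{\theta_{j}}\psi(\theta)$ (cf.\ Amari \cite{Shu}), two differentiations of $\psi$ return exactly the matrix of $h$ displayed in the introduction, and the constraint $\theta_{2}<0$ makes $h$ positive definite on $\Theta$ (its determinant equals $-1/(4\theta_{2}^{3})>0$). Plugging this $h_{ij}$ into \eqref{eq7} and \eqref{eq06} then reproduces the coordinate expressions for $(g,J,\omega)$ quoted in the introduction.

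\textbf{Step 2 --- dual flatness and conclusion.} For an exponential family $\partial_{\theta_{i}}l=t_{i}-\partial_{\theta_{i}}\psi$, hence $\partial_{\theta_{i}}\partial_{\theta_{j}}l=-\partial_{\theta_{i}}\partial_{\theta_{j}}\psi$ is non-random, so the components of the exponential ($\alpha=1$) connection satisfy
\[ \Gamma^{(e)}_{ij,k}(\theta)=\mathbb{E}\!\left[(\partial_{\theta_{i}}\partial_{\theta_{j}}l)(\partial_{\theta_{k}}l)\right]=-\big(\partial_{\theta_{i}}\partial_{\theta_{j}}\psi\big)\,\mathbb{E}[\partial_{\theta_{k}}l]=0, \]
because $\mathbb{E}[\partial_{\theta_{k}}l]=0$. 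Thus in the chart $\theta=(\theta_{1},\theta_{2})$ all Christoffel symbols of $\nabla=\nabla^{(e)}$ vanish --- exactly the vanishing recorded in the introduction --- so $\nabla$ is flat ($\theta$ being a $\nabla$-affine coordinate system) and, being a statistical connection, torsion-free. By the $\nabla^{(e)}\!\leftrightarrow\!\nabla^{(m)}$ duality with respect to the Fisher metric, $\nabla^{*}=\nabla^{(m)}$ is likewise flat, with the expectation parameters $\eta=(\mathbb{E}[t_{1}],\mathbb{E}[t_{2}])$ as affine coordinates (which is why \eqref{eq06} is written in the $\eta$-chart). Hence $(S,h,\nabla,\nabla^{*})$ is dually flat, and \eqref{eq6} gives at once that $(TS,g,J,\omega)$ is K\"ahler. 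As a consistency check, with $\left(\Gamma^{h}_{ij}\right)^{b}\circ\pi=0$ the K\"ahler-function conditions \eqref{eq8} collapse to their flat form.

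\textbf{Main obstacle.} There is no real obstacle: once the exponential structure is recognised the argument is a short computation plus the invocation of \eqref{eq6}. What needs care is (i) fixing $\Theta=\mathbb{R}\times(-\infty,0)$ and the sign of $\theta_{2}$ so that $\psi$ is smooth and strictly convex and $h$ is a genuine positive-definite Riemannian metric; (ii) checking that the connection fed into Dombrowski's construction is indeed the $\ell$-representation/exponential connection of Section~\ref{sec3}, which is what licenses ``$\nabla$ flat in the $\theta$-chart''; and (iii) recalling that for torsion-free connections on a $2$-dimensional base, dual flatness amounts to the vanishing of the curvatures of both $\nabla$ and $\nabla^{*}$, which is what Steps 1--2 establish. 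If one wished to bypass \eqref{eq6}, the alternative is purely computational: with $h_{ij}$ from Step~1 substitute into the explicit matrices \eqref{eq7}, compute the Nijenhuis tensor of $J$ and $d\omega$, and check that both vanish --- more work, but entirely self-contained.
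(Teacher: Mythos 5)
Your proposal is correct, but it takes a genuinely different route from the paper. The paper's own proof stays entirely inside Dombrowski's formulas \eqref{eq5}: it checks $J^{2}=-\mathrm{Id}$ and $g(J\cdot,J\cdot)=g(\cdot,\cdot)$ by direct substitution, concludes that $(TS,g,J)$ is almost Hermitian, and then simply \emph{asserts} that $d\omega=0$ and that flatness of $\nabla$ makes $J$ integrable --- the two conditions that actually carry the K\"ahler property are stated rather than derived at this point (they are only verified computationally later, in the proof of Theorem~\ref{pro2}, for the explicit lognormal matrices). You instead take the algebraic compatibilities for granted (they are built into the construction) and spend your effort on the one nontrivial input: you recognise the lognormal model as an exponential family, show $\Gamma^{(e)}_{ij,k}=\mathbb{E}[(\partial_{i}\partial_{j}l)(\partial_{k}l)]=0$ because $\partial_{i}\partial_{j}l$ is non-random and the score has zero mean, deduce flatness of $\nabla$ in the $\theta$-chart and of $\nabla^{*}$ in the $\eta$-chart, and invoke Molitor's equivalence \eqref{eq6}. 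Your route is the more complete one, since it supplies the justification for precisely the steps the paper leaves as assertions, and it also explains \emph{why} Proposition~\ref{lem1} (vanishing Christoffel symbols) holds rather than quoting it; what it costs is reliance on the external theorem \eqref{eq6}, whereas the paper's (and Theorem~\ref{pro2}'s) computational route is self-contained once the matrices are written down. One caveat worth keeping: as literally stated the proposition quantifies over an arbitrary statistical manifold $(S,h,\nabla)$, for which the claim is false without dual flatness; your proof correctly reads it, as the context demands, as a statement about the lognormal model with the exponential connection, and your point (ii) about identifying $\nabla$ with the $\ell$-representation connection is exactly the hypothesis that must be made explicit.
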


\begin{proof}
By Dombrowski's construction, the metric $g$, almost complex
structure $J$, and fundamental form $\omega$ on $TS$ are given by:
\begin{equation*}
g((u_{p}, v, w), (u_{p}, \bar{v}, \bar{w})) = h(v, \bar{v}) + h(w,
\bar{w})\end{equation*}
\begin{equation*}
\omega((u_{p}, v, w), (u_{p}, \bar{v}, \bar{w})) = h(v, \bar{w}) -
h(w, \bar{v})\end{equation*}
\begin{equation*}
J(u_{p}, v, w) = (u_{p}, -w, v).\end{equation*} We verify that
$J^2=-Id$ as follows, \begin{equation*}J^2(u, v, w)=J(J(u, v,
w))=J(u_{p}, -v, w) =(u_{p}, -v, -w)= -(u_{p}, v,
w)=-Id\end{equation*}, and that
\begin{equation*}g\left(J(u_{p}, v, w), J(u_{p}, \bar{v}, \bar{w})\right) =g\left((u_{p}, -w, v),
J(u_{p},-\bar{w}, \bar{v} )\right)=h(-w, -\bar{w})+h(v,
\bar{v}),\end{equation*} we know that $h(-w, -\bar{w})=h(w,
\bar{w})$; So we have \begin{equation*}g\left((J(u_{p}, v, w),
J(u_{p}, \bar{v}, \bar{w})\right)=g\left((u_{p}, v, w), (u_{p},
\bar{v}, \bar{w})\right)\end{equation*}, $J$ preserves the inner
product defined by $g$, the structure $(TS, g, J)$ is almost
Hermitian. which confirms that $(TS, g, J)$ is an almost Hermitian
manifold. Additionally, the fundamental 2-form $\omega$ is closed,
i.e., $d\omega = 0$, and the connection $\nabla$ is flat, implying
that $J$ is integrable. Hence, all conditions are satisfied for
$(TS, g, J, \omega)$ to be a K$\ddot{a}$hler.
\end{proof}

Let $S = \left\{p_{ \theta}(x)=\frac{1}{\sqrt{2\pi}\sigma
x}e^{-\frac{(\log x-\mu)^{2}}{2\sigma^{2}}},\left.
                        \begin{array}{ll}
                         \theta= (\mu , \sigma )\in \mathrm{I\! R}\times \mathrm{I\! R}^{*}_{+} & \hbox{} \\
                          x\in \mathrm{I\! R}^{*}_{+} & \hbox{}
                        \end{array}
                      \right.
\right\}$ be a lognormal model where $p_{ \theta}$ is its density
function. $
    \theta_{2}= -\frac{1}{2\sigma^{2}},\;\theta_{1}=
\frac{\mu}{\sigma^{2}},\;-2\theta_{2}>0. $
So,\begin{eqnarray}\label{eq16} \ell(x, \theta)=-\log
x+\theta_{1}\log(x)+\theta_{2}\log^{2}(x)
+\frac{\theta^{2}_{1}}{4\theta_{2}}\nonumber\\
+\frac{1}{2}\log(-2\theta_{2})-\frac{1}{2}\log(2\pi)
\end{eqnarray}
with the potential function
\begin{eqnarray}\label{eq17}
\Phi(\theta)&=&-
\frac{\theta^{2}_{1}}{4\theta_{2}}-\frac{1}{2}\log(-2\theta_{2})+\frac{1}{2}\log(2\pi)
\end{eqnarray}

Let $T^{\ell}=\left\{u_{p}|u_{p}=A^{1}
\partial_{\theta_{1}}\ell(x,\theta)+A^{2}\partial_{\theta_{2}}\ell(x,\theta)\right\}$ the Amari's $\ell$-representation on
lognormal manifold in the natural basis, with
$\partial_{\theta_{i}}:=\frac{\partial}{\partial \theta_{i}}$
\begin{equation*}B^{\ell}=\left\{\partial_{\theta_{1}}\ell(x,\theta)=\log(x)+\frac{\theta_{1}}{2\theta_{2}},\partial_{\theta_{2}}\ell(x,\theta)=\log^{2}(x)
-\frac{\theta^{2}_{1}}{4\theta^{2}_{2}}\right\}.\end{equation*} We
denote that $T^{\ell}\cong T_{p}S$, so
$\partial_{\theta_{1}}\ell(x,\theta),\partial_{\theta_{2}}\ell(x,\theta)\in
T_{p}S$. The bundle tangent on $S$ is given by
\begin{equation}\label{eq13}
    TS=\coprod_{p\in S}T_{p}S=\coprod_{p\in S}\{p\}\times T_{p}S=\left\{\left(p,u_{p}\right)|p\in S\; and \; u_{p}\in T_{p}S\right\}.
\end{equation}

\begin{theorem}\label{pro2}Let $h$ the  Riemannian metric,  and
$\nabla$ the affine connection on $S$. The manifold $\left(TS, g, J,
\omega\right)$ with respect to the K$\ddot{a}$hler structure $(g, J,
\omega)$ is a K$\ddot{a}$hler manifold on $TS$ associated with
$(h,\nabla)$.
\begin{enumerate}
    \item In the coordinates system
$\left(\theta_{1},\theta_{2},\dot{\theta}_{1},\dot{\theta}_{2}\right)$
the K$\ddot{a}$hler structure $(g, J, \omega)$ is given by
\begin{equation}\label{eq14}
    g=\left(
        \begin{array}{cccc}
          -\frac{1}{2\theta_{2}} & \frac{\theta_{1}}{2\theta^{2}_{2}} & 0 & 0 \\
          \frac{\theta_{1}}{2\theta^{2}_{2}} & -\frac{\theta^{2}_{1}-\theta_{2}}{2\theta^{3}_{2}} & 0 & 0 \\
          0 & 0 &  -\frac{1}{2\theta_{2}} &  \frac{\theta_{1}}{2\theta^{2}_{2}} \\
0& 0 &  \frac{\theta_{1}}{2\theta^{2}_{2}} & -\frac{\theta^{2}_{1}-\theta_{2}}{2\theta^{3}_{2}} \\
        \end{array}
    \right),\; J=\left(
        \begin{array}{cccc}
         0&0 & -1 & 0 \\
          0 &0 & 0 & -1 \\
          1 & 0 &  0 &  0 \\
0& 1 & 0 &0 \\
        \end{array}
    \right),\end{equation}\; \begin{equation*}\omega=\left(
        \begin{array}{cccc}
         0&0 & -\frac{1}{2\theta_{2}}& \frac{\theta_{1}}{2\theta^{2}_{2}} \\
          0 &0 & \frac{\theta_{1}}{2\theta^{2}_{2}} & -\frac{\theta^{2}_{1}-\theta_{2}}{2\theta^{3}_{2}} \\
          \frac{1}{2\theta_{2}} & -\frac{\theta_{1}}{2\theta^{2}_{2}} &  0 &  0 \\
-\frac{\theta_{1}}{2\theta^{2}_{2}}& \frac{\theta^{2}_{1}-\theta_{2}}{2\theta^{3}_{2}} & 0 &0 \\
        \end{array}
    \right)
\end{equation*}
    \item In the coordinates system
$\left(\eta_{1},\eta_{2},\dot{\theta}_{1},\dot{\theta}_{2}\right)$,
the K$\ddot{a}$hler structure $(g, J, \omega)$ is  given by
\begin{equation}\label{eq15}
    g=\left(
        \begin{array}{cccc}
          2\theta_{1}^{2}-2\theta_{2}  & 2\theta_{1}\theta_{2} & 0 & 0 \\
          2\theta_{1}\theta_{2} & 2\theta_{2}^{2} & 0 & 0 \\
          0 & 0 &  -\frac{1}{2\theta_{2}} &  \frac{\theta_{1}}{2\theta^{2}_{2}} \\
0& 0 &  \frac{\theta_{1}}{2\theta^{2}_{2}} & -\frac{\theta^{2}_{1}-\theta_{2}}{2\theta^{3}_{2}} \\
        \end{array}
    \right),\; J=\left(
        \begin{array}{cccc}
         0&0 & \frac{1}{2\theta_{2}} & -\frac{\theta_{1}}{2\theta^{2}_{2}} \\
          0 &0 & -\frac{\theta_{1}}{2\theta^{2}_{2}} & \frac{\theta^{2}_{1}-\theta_{2}}{2\theta^{3}_{2}} \\
          2\theta_{1}^{2}-2\theta_{2}  & 2\theta_{1}\theta_{2} &  0 &  0 \\
2\theta_{1}\theta_{2}& 2\theta_{2}^{2}  & 0 &0 \\
        \end{array}
    \right),\end{equation}\; \begin{equation*}\omega=\left(
        \begin{array}{cccc}
         0&0 & 1 & 0 \\
          0 &0 & 0 & 1 \\
          -1 & 0 &  0 &  0 \\
0& -1 & 0 &0 \\
        \end{array}
    \right)
\end{equation*}
\end{enumerate}
\end{theorem}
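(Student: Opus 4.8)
The plan is to reduce the statement to a single genuine computation — the Fisher metric of the lognormal family in its natural parameters — and then to feed it mechanically into Dombrowski's construction as already recorded in \eqref{eq5}, \eqref{eq7} and \eqref{eq06}, the K\"ahler property itself being inherited from Proposition~\ref{proaa}. First I would observe that \eqref{eq16} exhibits the lognormal model as an exponential family in the natural coordinates $(\theta_{1},\theta_{2})$, with sufficient statistics $\log x$, $\log^{2}x$ and log-partition (potential) function $\Phi$ given in \eqref{eq17}. Since the $x$-dependent part of $\ell(x,\theta)$ is affine in $\theta$, the Hessians $\partial_{\theta_{i}}\partial_{\theta_{j}}\ell$ are deterministic and equal to $-\partial_{\theta_{i}}\partial_{\theta_{j}}\Phi$; hence the Fisher metric is simply $h_{ij}=-\mathbb{E}[\partial_{\theta_{i}}\partial_{\theta_{j}}\ell]=\partial_{\theta_{i}}\partial_{\theta_{j}}\Phi$, and differentiating \eqref{eq17} twice yields
\begin{equation*}
(h_{ij})=\begin{pmatrix} -\frac{1}{2\theta_{2}} & \frac{\theta_{1}}{2\theta_{2}^{2}} \\ \frac{\theta_{1}}{2\theta_{2}^{2}} & -\frac{\theta_{1}^{2}-\theta_{2}}{2\theta_{2}^{3}}\end{pmatrix}.
\end{equation*}

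For part (1), in the bundle coordinates $(\theta_{1},\theta_{2},\dot\theta_{1},\dot\theta_{2})$ adapted to the flat connection $\nabla$, Dombrowski's formulas \eqref{eq5} (the matrix shorthand being \eqref{eq7}) make $g$ the block-diagonal matrix carrying two copies of $(h_{ij})$, make $J$ the endomorphism $(u_{p},v,w)\mapsto(u_{p},-w,v)$ whose matrix has $-I$ in the upper-right and $I$ in the lower-left block, and make $\omega=g(J\cdot,\cdot)$ the matrix with $(h_{ij})$ upper-right and $-(h_{ij})$ lower-left. Plugging in the matrix above gives exactly \eqref{eq14} together with the $\omega$ displayed beneath it; the relations $J^{2}=-\mathrm{Id}$ and $\omega=g(J\cdot,\cdot)$ are verified just as in the proof of Proposition~\ref{proaa}.

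For part (2), I would pass to the mixed coordinates $(\eta_{1},\eta_{2},\dot\theta_{1},\dot\theta_{2})$, where $\eta_{i}=\partial_{\theta_{i}}\Phi$ are the expectation ($\nabla^{*}$-affine) coordinates. The change of base coordinates $\theta\mapsto\eta$ has Jacobian $\partial\eta_{i}/\partial\theta_{j}=\partial_{\theta_{i}}\partial_{\theta_{j}}\Phi=h_{ij}$, so the Fisher metric expressed in the $\eta$'s is the inverse matrix $(h^{ij})$; computing $\det(h_{ij})=-1/(4\theta_{2}^{3})$ and inverting gives
\begin{equation*}
(h^{ij})=\begin{pmatrix} 2\theta_{1}^{2}-2\theta_{2} & 2\theta_{1}\theta_{2}\\ 2\theta_{1}\theta_{2} & 2\theta_{2}^{2}\end{pmatrix}.
\end{equation*}
Inserting $(h^{ij})$ for the base part and $(h_{ij})$ for the fibre part into the mixed-coordinate Dombrowski formulas \eqref{eq06} produces $g=\mathrm{diag}\big((h^{ij}),(h_{ij})\big)$, $J$ with $-(h_{ij})$ in the upper-right and $(h^{ij})$ in the lower-left block, and $\omega$ the standard symplectic block matrix with $I$ upper-right and $-I$ lower-left, which is precisely \eqref{eq15}; equivalently one can transform the tensors of part (1) by the transition Jacobian $\mathrm{diag}\big((h_{ij}),I\big)$ and recover the same matrices. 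Finally, the K\"ahler assertion follows because $h_{ij}=\partial_{\theta_{i}}\partial_{\theta_{j}}\Phi$ is a Hessian metric, so $(S,h,\nabla,\nabla^{*})$ is dually flat ($\theta$ being $\nabla$-affine and $\eta$ being $\nabla^{*}$-affine), whence \eqref{eq6} — or directly Proposition~\ref{proaa} — gives that $(TS,g,J,\omega)$ is K\"ahler.

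These are routine calculations; the only points that need care are in part (2). One must correctly identify $\eta_{i}=\partial_{\theta_{i}}\Phi$ as the coordinates in which the base metric becomes $(h^{ij})$, which rests on the Legendre-duality identity $\partial\eta/\partial\theta=(h_{ij})$ valid for any dually flat structure, and one must keep the sign conventions for $J$ and $\omega$ consistent with \eqref{eq5} rather than with the slightly differently signed schematic matrix appearing in \eqref{eq7}. It is also worth double-checking that the two off-diagonal blocks of $J$ in \eqref{eq15} are $(h_{ij})$ and $(h^{ij})$ rather than the same matrix twice, since the transition map from the coordinates of part (1) to those of part (2) rescales the base and fibre directions differently.
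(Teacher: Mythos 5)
Your proposal is correct and follows essentially the same route as the paper: compute the Fisher metric $h_{ij}$ of the lognormal family in the natural coordinates (your Hessian-of-the-potential shortcut gives exactly the matrix the paper obtains by evaluating $-\mathbb{E}[\partial_{\theta_i}\partial_{\theta_j}\ell]$ term by term), invert it to get $(h^{ij})$, and insert both into the block formulas of Dombrowski's construction in the $(\theta,\dot\theta)$ and $(\eta,\dot\theta)$ coordinate systems, with the K\"ahler property supplied by dual flatness. Your cautionary remark about the sign of $J$ in \eqref{eq7} versus \eqref{eq5} is well taken, since the paper's own verification uses $J(\partial_{\dot\theta_i})=-\partial_{\theta_i}$, consistent with the matrix in \eqref{eq14} rather than with the schematic \eqref{eq7}.
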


\begin{proof}
Let $S = \left\{p_{ \theta}(x)=\frac{1}{\sqrt{2\pi}\sigma
x}e^{-\frac{(\log x-\mu)^{2}}{2\sigma^{2}}},\left.
                        \begin{array}{ll}
                         \theta= (\mu , \sigma )\in \mathrm{I\! R}\times \mathrm{I\! R}^{*}_{+} & \hbox{} \\
                          x\in \mathrm{I\! R}^{*}_{+} & \hbox{}
                        \end{array}
                      \right.
\right\}$ be a lognormal model where $p_{ \theta}$ is its density
function.

It follow from (\ref{eq17}), from theorem 4 in \cite{mama} and
Amari's theorem 3.5 and 3.4 in \cite{Shu}, we have
\begin{equation}\label{re1}
 \eta_{i}= \frac{\partial \Phi(\theta)}{\partial \theta_{i} }
 ,\;1\leq i \leq 2,\;\textrm{so we have}\; \eta_{1}= -\frac{\theta_{1}}{2\theta_{2}},\;\eta_{2}=\frac{\theta^{2}_{1}-2\theta_{2}}{4\theta^{2}_{2}}
\end{equation}
We define,\begin{equation}\label{gij1}h_{ij}=\langle
\partial_{\theta_{i}},\partial_{\theta_{j}}\rangle_{h}:=h
\left(\partial_{\theta_{i}},\partial_{\theta_{j}}\right).
\end{equation}
Then we can use the Amari's \cite{Shu}, $l-$representation
associated respectively to coordinate  $\theta$ and $\eta$; and
define the following scalar production on $T_{\theta}$ where
$T_{\theta}$ is the $l-$representation of the tangent space with
respect to the coordinate $\theta$. Using (\ref{eq1}) we have:
\[
 h\left(\partial_{\theta_{1}}\ell(x,\theta),\partial_{\eta_{1}}\ell(x,\theta)\right)
=\mathbb{E}\left[\partial_{\eta_{1}}\left(-\frac{\theta_{1}}{2\theta_{2}}\right)\right]
=\mathbb{E}\left[1\right]=1.\]
\[
h\left(\partial_{\theta_{1}}\ell(x,\theta),\partial_{\eta_{2}}\ell(x,\theta)\right)=\mathbb{E}\left[\partial_{\eta_{2}}\left(-\frac{\theta_{1}}{2\theta_{2}}\right)\right]=0.\]
\[h\left(\partial_{\theta_{2}}\ell(x,\theta),\partial_{\eta_{1}}\ell(x,\theta)\right)=\mathbb{E}\left[\partial_{\eta_{1}}\left(\frac{\theta^{2}_{1}-2\theta_{2}}{4\theta^{2}_{2}}\right)\right]=0.
  \]
  \[h\left(\partial_{\theta_{2}}\ell(x,\theta),\partial_{\eta_{2}}\ell(x,\theta)\right)=\mathbb{E}\left[\partial_{\eta_{2}}\left(\frac{\theta^{2}_{1}-2\theta_{2}}{4\theta^{2}_{2}}\right)\right]=1.
  \]
Two coordinate system $\left(\theta_{i}\right)_{1\leq i \leq 2}$ and
$\left(\eta_{i}\right)_{1\leq i \leq 2}$ are dual mutually.\\
Using (\ref{gij1}), and (\ref{eq1}), we obtain
\[
h_{11}=-\mathbb{E}\left[\partial_{\theta_{1}}\partial_{\theta_{1}}\ell(x,\theta)\right]
=-\frac{1}{2\theta_{2}}.\]
\[
h_{12}=-\mathbb{E}\left[\partial_{\theta_{1}}\partial_{\theta_{2}}\ell(x,\theta)\right]
=\frac{\theta_{1}}{2\theta^{2}_{2}}.\]
\[h_{21}=-\mathbb{E}\left[\partial_{\theta_{2}}\partial_{\theta_{1}}\ell(x,\theta)\right]
=\frac{\theta_{1}}{2\theta^{2}_{2}}\]
  \[h_{22}=-\mathbb{E}\left[\partial_{\theta_{2}}\partial_{\theta_{2}}\ell(x,\theta)\right]
=-\frac{\theta^{2}_{1}-\theta_{2}}{2\theta^{3}_{2}}
\] So we have the tenseur metric given by
\begin{eqnarray*}
h&=&\left[\begin{array}{cc}
-\frac{1}{2\theta_{2}} &\frac{\theta_{1}}{2\theta^{2}_{2}} \\
 \frac{\theta_{1}}{2\theta^{2}_{2}}&-\frac{\theta^{2}_{1}-\theta_{2}}{2\theta^{3}_{2}}
 \end{array}\right]
\end{eqnarray*}
and  it is straightforward that for $\theta_{2}\neq 0$, we obtain
\begin{eqnarray*} h^{-1} &=& \left[ \begin{array}{cc} 2\theta_{1}^{2}-2\theta_{2}      &   2\theta_{1}\theta_{2}\\ 2\theta_{1}\theta_{2}  &  2\theta_{2}^{2}  \\ \end{array}
 \right].
\end{eqnarray*}
We define  the canonical projection
\begin{equation}\label{pi}
\pi : TS \longrightarrow S.\end{equation} Let
$\left(\theta_{1},\theta_{2}\right)$  be a coordinate system on $S$.
So, $\left(\theta_{1}\circ \pi,\theta_{2}\circ \pi,d\theta_{1},
d\theta_{2}\right)$ be a coordinate system on $TS$. In the follows
we denote $\left(\theta_{1}\circ \pi,\theta_{2}\circ
\pi,d\theta_{1}, d\theta_{2}\right)$ as
$\left(\theta_{1},\theta_{2},\dot{\theta}_{1},
\dot{\theta}_{2}\right)$. We define an other canonical projection
 \begin{equation}\label{pit}
 \pi_{TS}:T (TS)
\longrightarrow TS,\end{equation} and we have $\left(\theta_{1}\circ
\pi\circ \pi_{TS},\theta_{2}\circ \pi\circ
\pi_{TS},(d\theta_{1})\circ \pi_{TS}, (d\theta_{2})\circ
\pi_{TS},d(\theta_{1}\circ \pi),d(\theta_{2}\circ
\pi),d(d\theta_{1}),d(d\theta_{2})\right)$ the local coordinates
system on $T(TS)$. By setting\begin{eqnarray*}
  m_{1}&=& \theta_{1}\circ
\pi\circ \pi_{TS},\; m_{2} = \theta_{2}\circ \pi\circ
\pi_{TS},d\theta_{1}\circ
\pi_{TS} \\
   u_{1} &=& (d\theta_{1})\circ \pi_{TS},\;
u_{2} = (d\theta_{2})\circ
\pi_{TS} \\
   v_{1} &=& d(\theta_{1}\circ \pi),\;
   v_{2} = d(\theta_{2}\circ \pi) \\
t_{1} &=& d(d\theta_{1}),\;
   t_{2} = d(d\theta_{2})
\end{eqnarray*}
By definition of the reduction of canonical projection $K$ and
$\pi_{*}$ in \cite{dom}, we have\begin{eqnarray*}
  K(m_{1},u_{1},v_{1},t_{1}) &=& (\theta_{1}\circ
\pi\circ \pi_{TS}, d(d\theta_{1}))\\
  K(m_{2},u_{2},v_{2},t_{2}) &=& (\theta_{2}\circ
\pi\circ \pi_{TS}, d(d\theta_{2})) \\
  \pi_{*}(m_{1},u_{1},v_{1},t_{1}) &=& (\theta_{1}\circ
\pi\circ \pi_{TS},d(\theta_{1}\circ \pi) )\\
  \pi_{*}(m_{2},u_{2},v_{2},t_{2}) &=& (\theta_{2}\circ
\pi\circ \pi_{TS},d(\theta_{2}\circ \pi) ).
\end{eqnarray*}
In follows we write
$u_{p}=\left(\theta_{1},\theta_{2},\dot{\theta}_{1},\dot{\theta}_{2}\right)$
the coordinate system on $TS$, and  we also write
$\left(\eta_{1},\eta_{2},\dot{\theta}_{1},\dot{\theta}_{2}\right)$.\\
In this basis, the tangent vector $A$ on bundle tangent $ T(TS)$ in
$u_{p}$ is given by,
\begin{eqnarray*}
A=a^{1}\partial_{\theta_{1}}+a^{2}\partial_{\theta_{2}}
+b^{1}\partial_{\dot{\theta}_{1}}+b^{2}\partial_{\dot{\theta}_{2}},\;
where,\;a^{i},b^{i}\in C^{\infty}(S)
\end{eqnarray*}
with \begin{eqnarray*}
v_{p}=a^{1}\partial_{\theta_{1}}+a^{2}\partial_{\theta_{2}}\in
Hor(TS)_{u_{p}}, and
\;w_{p}=b^{1}\partial_{\dot{\theta}_{1}}+b^{2}\partial_{\dot{\theta}_{2}}\in
Ver(TS)_{u_{p}}
\end{eqnarray*}
we know that $ g\left(v_{p},w_{p}\right)=0 $ more
explicitly,\begin{eqnarray*}
g\left(\partial_{\theta_{1}},\partial_{\dot{\theta}_{1}}\right)
=g\left(\partial_{\theta_{1}},\partial_{\dot{\theta}_{2}}\right)=g\left(\partial_{\dot{\theta}_{1}},\partial_{\theta_{1}}\right)=
g\left(\partial_{\dot{\theta}_{2}},\partial_{\theta_{1}}\right)=0
\end{eqnarray*}
and
\begin{eqnarray*}
g\left(\partial_{\dot{\theta}_{i}},\partial_{\dot{\theta}_{j}}\right)
&=&h\left(\partial_{\dot{\theta}_{i}},\partial_{\dot{\theta}_{j}}\right),\;g\left(\partial_{\theta_{i}},\partial_{\theta_{j}}\right)
=h\left(\partial_{\theta_{i}},\partial_{\theta_{j}}\right)
\end{eqnarray*}
So, for
\begin{eqnarray*}
\bar{A}=\bar{a}^{1}\partial_{\theta_{1}}+\bar{a}^{2}\partial_{\theta_{2}}
+\bar{b}^{1}\partial_{\dot{\theta}_{1}}+\bar{b}^{2}\partial_{\dot{\theta}_{2}},\;
\in T_{u_{p}}(T(S))
\end{eqnarray*}
and \begin{eqnarray*}
\bar{v}_{p}=\bar{a}^{1}\partial_{\theta_{1}}+\bar{a}^{2}\partial_{\theta_{2}}\in
Hor(TS)_{u_{p}},\;
\bar{w}_{p}=\bar{b}^{1}\partial_{\dot{\theta}_{1}}+\bar{b}^{2}\partial_{\dot{\theta}_{2}}\in
Ver(TS)_{u_{p}}
\end{eqnarray*} we write
\begin{eqnarray*}g\left(A,\bar{A}\right)&=&a^{1}\bar{a}^{1}g\left(\partial_{\theta_{1}},\partial_{\theta_{1}}\right)
+a^{1}\bar{a}^{2}g\left(\partial_{\theta_{1}},\partial_{\theta_{2}}\right)+
a^{1}\bar{b}^{1}g\left(\partial_{\theta_{1}},\partial_{\dot{\theta}_{1}}\right)+
a^{1}\bar{b}^{2}g\left(\partial_{\theta_{1}},\partial_{\dot{\theta}_{2}}\right)\\
&&+a^{2}\bar{a}^{1}g\left(\partial_{\theta_{2}},\partial_{\theta_{1}}\right)
+a^{2}\bar{a}^{2}g\left(\partial_{\theta_{2}},\partial_{\theta_{2}}\right)+
a^{2}\bar{b}^{1}g\left(\partial_{\theta_{2}},\partial_{\dot{\theta}_{1}}\right)+
a^{2}\bar{b}^{2}g\left(\partial_{\theta_{2}},\partial_{\dot{\theta}_{2}}\right)\\
&&+b^{1}\bar{a}^{1}g\left(\partial_{\dot{\theta}_{1}},\partial_{\theta_{1}}\right)
+b^{1}\bar{a}^{2}h\left(\partial_{\dot{\theta}_{1}},\partial_{\theta_{2}}\right)+
b^{1}\bar{b}^{1}g\left(\partial_{\dot{\theta}_{1}},\partial_{\dot{\theta}_{1}}\right)+
b^{1}\bar{b}^{2}g\left(\partial_{\dot{\theta}_{1}},\partial_{\dot{\theta}_{2}}\right)\\
&&++b^{2}\bar{a}^{1}g\left(\partial_{\dot{\theta}_{2}},\partial_{\theta_{1}}\right)
+b^{2}\bar{a}^{2}g\left(\partial_{\dot{\theta}_{2}},\partial_{\theta_{2}}\right)+
b^{2}\bar{b}^{1}g\left(\partial_{\dot{\theta}_{2}},\partial_{\dot{\theta}_{1}}\right)+
b^{2}\bar{b}^{2}g\left(\partial_{\dot{\theta}_{2}},\partial_{\dot{\theta}_{2}}\right)
\end{eqnarray*}
we obtain
\begin{eqnarray*}g\left(A,\bar{A}\right)&=&a^{1}\bar{a}^{1}g\left(\partial_{\theta_{1}},\partial_{\theta_{1}}\right)
+a^{1}\bar{a}^{2}g\left(\partial_{\theta_{1}},\partial_{\theta_{2}}\right)+a^{2}\bar{a}^{1}g\left(\partial_{\theta_{2}},\partial_{\theta_{1}}\right)
+a^{2}\bar{a}^{2}g\left(\partial_{\theta_{2}},\partial_{\theta_{2}}\right)\\
&&+b^{1}\bar{b}^{1}g\left(\partial_{\dot{\theta}_{1}},\partial_{\dot{\theta}_{1}}\right)+
b^{1}\bar{b}^{2}g\left(\partial_{\dot{\theta}_{1}},\partial_{\dot{\theta}_{2}}\right)+b^{2}\bar{b}^{1}g\left(\partial_{\dot{\theta}_{2}},\partial_{\dot{\theta}_{1}}\right)+
b^{2}\bar{b}^{2}g\left(\partial_{\dot{\theta}_{2}},\partial_{\dot{\theta}_{2}}\right)
\end{eqnarray*}
\begin{eqnarray*}g\left(A,\bar{A}\right)&=&a^{1}\bar{a}^{1}h_{11}
+a^{1}\bar{a}^{2}h_{12}+a^{2}\bar{a}^{1}h_{21}
+a^{2}\bar{a}^{2}h_{22}\\
&&+b^{1}\bar{b}^{1}h_{11}+
b^{1}\bar{b}^{2}h_{12}+b^{2}\bar{b}^{1}h_{21}+
b^{2}\bar{b}^{2}h_{22}
\end{eqnarray*}
\begin{eqnarray*}g\left(A,\bar{A}\right)&=&h\left(v_{p},\bar{v}_{p}\right)
+h\left(w_{p},\bar{w}_{p}\right)
\end{eqnarray*}
a tangent vector $A$ on bundle tangent $ T(TS)$ in $u_{p}$ is
identify by $\left(u_{p},v_{p},w_{p}\right)$. The same $B$ is
identify by $\left(u_{p},\bar{v}_{p},\bar{w}_{p}\right)$ we obtain
the Dombrowski's definition
\begin{eqnarray*}g\left(\left(u_{p},v_{p},w_{p}\right),\left(u_{p},\bar{v}_{p},\bar{w}_{p}\right)\right)&=&h\left(v_{p},\bar{v}_{p}\right)
+h\left(w_{p},\bar{w}_{p}\right)
\end{eqnarray*}
the matrix form is given by
\begin{equation*}
    g=\left(
        \begin{array}{cccc}
          h_{11} & h_{12} & 0 & 0 \\
          h_{21} & h_{22} & 0 & 0 \\
          0 & 0 &  h_{11} &  h_{12} \\
0& 0 &  h_{21} & h_{22} \\
        \end{array}
    \right)
    \end{equation*}we obtain
\begin{equation*}
    g=\left(
        \begin{array}{cccc}
          -\frac{1}{2\theta_{2}} & \frac{\theta_{1}}{2\theta^{2}_{2}} & 0 & 0 \\
          \frac{\theta_{1}}{2\theta^{2}_{2}} & -\frac{\theta^{2}_{1}-\theta_{2}}{2\theta^{3}_{2}} & 0 & 0 \\
          0 & 0 &  -\frac{1}{2\theta_{2}} &  \frac{\theta_{1}}{2\theta^{2}_{2}} \\
0& 0 &  \frac{\theta_{1}}{2\theta^{2}_{2}} & -\frac{\theta^{2}_{1}-\theta_{2}}{2\theta^{3}_{2}} \\
        \end{array}
    \right)
    \end{equation*}
 We written $J$ by
\begin{eqnarray*}
J(\partial_{\theta_{1}})&=&\partial_{\dot{\theta}_{1}},\;J(\partial_{\theta_{2}})=\partial_{\dot{\theta}_{2}}\\
J(\partial_{\dot{\theta}_{1}})&=&-\partial_{\theta_{1}},\;J(\partial_{\dot{\theta}_{2}})=-\partial_{\theta_{2}}.
\end{eqnarray*}

We have $J=\left(
        \begin{array}{cccc}
         0&0 & -1 & 0 \\
          0 &0 & 0 & -1 \\
          1 & 0 &  0 &  0 \\
0& 1 & 0 &0 \\
        \end{array}
    \right)$
We verify that $J^{2}=-Id$. $J$ is a complex structure.\\
Therefore,
\begin{eqnarray*}
\omega&=&\frac{1}{2}\omega_{11}d\theta_{1}\wedge
d\theta_{1}+\frac{1}{2}\omega_{12}d\theta_{1}\wedge d\theta_{2}+
+\frac{1}{2}\omega_{13}d\theta_{1}\wedge d\dot{\theta}_{1}+
\frac{1}{2}\omega_{14}d\theta_{1}\wedge d\dot{\theta}_{2}+\\
&&\frac{1}{2}\omega_{21}d\theta_{2}\wedge
d\theta_{1}+\frac{1}{2}\omega_{32}d\dot{\theta}_{1}\wedge
d\theta_{2} +\frac{1}{2}\omega_{43}d\dot{\theta}_{2}\wedge
d\dot{\theta}_{1}+ \frac{1}{2}\omega_{44}d\dot{\theta}_{2}\wedge
d\dot{\theta}_{2}+\\
&&\frac{1}{2}\omega_{22}d\theta_{2}\wedge
d\theta_{2}+\frac{1}{2}\omega_{33}d\dot{\theta}_{1}\wedge
d\dot{\theta}_{1} +\frac{1}{2}\omega_{42}d\dot{\theta}_{2}\wedge
d\theta_{2}+ \frac{1}{2}\omega_{44}d\dot{\theta}_{2}\wedge
d\dot{\theta}_{2}+\\
&&\frac{1}{2}\omega_{31}d\dot{\theta}_{2}\wedge
d\theta_{1}+\frac{1}{2}\omega_{34}d\dot{\theta}_{1}\wedge
d\dot{\theta}_{2} +\frac{1}{2}\omega_{41}d\dot{\theta}_{2}\wedge
d\theta_{1}+ \frac{1}{2}\omega_{23}d\theta_{3}\wedge
d\dot{\theta}_{1}+\\
&&\frac{1}{2}\omega_{24}d\theta_{2}\wedge d\dot{\theta}_{2}
\end{eqnarray*}
So, we calculate $\omega_{ij},\; 1\leq i,j\leq 4$ as follows
$\omega_{ij}=gJ$. We have
\begin{eqnarray*}
\omega_{13}&=&\omega\left(\partial_{\theta_{1}},\partial_{\dot{\theta}_{1}}\right)\\
&=&g(J\partial_{\theta_{1}},\partial_{\dot{\theta}_{1}})\\
&=&g(\partial_{\dot{\theta}_{1}},\partial_{\dot{\theta}_{1}})\\
&=&g_{33}\\
&=&-\frac{1}{2\theta_{2}}
\end{eqnarray*}
\begin{eqnarray*}
\omega_{14}&=&\omega\left(\partial_{\theta_{1}},\partial_{\dot{\theta}_{2}}\right)\\
&=&g(J\partial_{\theta_{1}},\partial_{\dot{\theta}_{2}})\\
&=&g(\partial_{\dot{\theta}_{1}},\partial_{\dot{\theta}_{2}})\\
&=&g_{34}\\
&=&\frac{\theta_{1}}{2\theta^{2}_{2}}
\end{eqnarray*}
\begin{eqnarray*}
\omega_{23}&=&\omega\left(\partial_{\theta_{2}},\partial_{\dot{\theta}_{1}}\right)\\
&=&g(J\partial_{\theta_{2}},\partial_{\dot{\theta}_{1}})\\
&=&g(\partial_{\dot{\theta}_{2}},\partial_{\dot{\theta}_{1}})\\
&=&g_{43}\\
&=&\frac{\theta_{1}}{2\theta^{2}_{2}}
\end{eqnarray*}

\begin{eqnarray*}
\omega_{24}&=&\omega\left(\partial_{\theta_{2}},\partial_{\dot{\theta}_{2}}\right)\\
&=&g(J\partial_{\theta_{2}},\partial_{\dot{\theta}_{2}})\\
&=&g(\partial_{\dot{\theta}_{2}},\partial_{\dot{\theta}_{2}})\\
&=&g_{44}\\
&=&-\frac{\theta^{2}_{1}-\theta_{2}}{2\theta^{3}_{2}}
\end{eqnarray*}
\begin{eqnarray*}
\omega_{31}&=&\omega\left(\partial_{\dot{\theta}_{1}},\partial_{\theta_{1}}\right)\\
&=&g(J\partial_{\dot{\theta}_{1}},\partial_{\theta_{1}})\\
&=&g(-\partial_{\theta_{1}},\partial_{\theta_{1}})\\
&=&-g_{11}\\
&=&\frac{1}{2\theta_{2}}
\end{eqnarray*}
\begin{eqnarray*}
\omega_{41}&=&\omega\left(\partial_{\dot{\theta}_{2}}\ell(x,\theta),\partial_{\theta_{1}}\right)\\
&=&g(J\partial_{\dot{\theta}_{2}},\partial_{\theta_{1}})\\
&=&g(-\partial_{\theta_{2}},\partial_{\theta_{1}})\\
&=&-g_{21}\\
&=&-\frac{\theta_{1}}{2\theta^{2}_{2}}
\end{eqnarray*}
\begin{eqnarray*}
\omega_{32}&=&\omega\left(\partial_{\dot{\theta}_{1}},\partial_{\theta_{2}}\right)\\
&=&g(J\partial_{\dot{\theta}_{1}},\partial_{\theta_{2}})\\
&=&g(-\partial_{\theta_{1}},\partial_{\theta_{2}})\\
&=&-g_{12}\\
&=&-\frac{\theta_{1}}{2\theta^{2}_{2}}
\end{eqnarray*}

\begin{eqnarray*}
\omega_{42}&=&\omega\left(\partial_{\dot{\theta}_{2}},\partial_{\theta_{2}}\right)\\
&=&g(J\partial_{\dot{\theta}_{2}},\partial_{\theta_{2}})\\
&=&g(-\partial_{\theta_{2}},\partial_{\theta_{2}})\\
&=&-g_{22}\\
&=&\frac{\theta^{2}_{1}-\theta_{2}}{2\theta^{3}_{2}}
\end{eqnarray*}
\begin{eqnarray*}
\omega_{11}&=&\omega_{12}=\omega_{21}=\omega_{22}=\omega_{33}=\omega_{34}=\omega_{43}=\omega_{44}=0.
\end{eqnarray*}
We obtain $\omega=\left(
        \begin{array}{cccc}
         0&0 & -\frac{1}{2\theta_{2}}& \frac{\theta_{1}}{2\theta^{2}_{2}} \\
          0 &0 & \frac{\theta_{1}}{2\theta^{2}_{2}} & -\frac{\theta^{2}_{1}-\theta_{2}}{2\theta^{3}_{2}} \\
          \frac{1}{2\theta_{2}} & -\frac{\theta_{1}}{2\theta^{2}_{2}} &  0 &  0 \\
-\frac{\theta_{1}}{2\theta^{2}_{2}}& \frac{\theta^{2}_{1}-\theta_{2}}{2\theta^{3}_{2}} & 0 &0 \\
        \end{array}
    \right)$\\ In the same, $g$ is symmetric
structure, and $J$ skew structure so, the product given the skew
structure. We have
\begin{equation*}
    \left(
        \begin{array}{cccc}
          -\frac{1}{2\theta_{2}} & \frac{\theta_{1}}{2\theta^{2}_{2}} & 0 & 0 \\
          \frac{\theta_{1}}{2\theta^{2}_{2}} & -\frac{\theta^{2}_{1}-\theta_{2}}{2\theta^{3}_{2}} & 0 & 0 \\
          0 & 0 &  -\frac{1}{2\theta_{2}} &  \frac{\theta_{1}}{2\theta^{2}_{2}} \\
0& 0 &  \frac{\theta_{1}}{2\theta^{2}_{2}} & -\frac{\theta^{2}_{1}-\theta_{2}}{2\theta^{3}_{2}} \\
        \end{array}
    \right)\left(
        \begin{array}{cccc}
         0&0 & -1 & 0 \\
          0 &0 & 0 & -1 \\
          1 & 0 &  0 &  0 \\
0& 1 & 0 &0 \\
        \end{array}
    \right)=\left(
        \begin{array}{cccc}
         0&0 & -\frac{1}{2\theta_{2}}& \frac{\theta_{1}}{2\theta^{2}_{2}} \\
          0 &0 & \frac{\theta_{1}}{2\theta^{2}_{2}} & -\frac{\theta^{2}_{1}-\theta_{2}}{2\theta^{3}_{2}} \\
          \frac{1}{2\theta_{2}} & -\frac{\theta_{1}}{2\theta^{2}_{2}} &  0 &  0 \\
-\frac{\theta_{1}}{2\theta^{2}_{2}}& \frac{\theta^{2}_{1}-\theta_{2}}{2\theta^{3}_{2}} & 0 &0 \\
        \end{array}
    \right)
\end{equation*}
Moreover, $\omega^{T}=-\omega.$ So, $\omega$ is skew. $\omega $ is
not degenerate because $det \omega= \frac{1}{4\theta^{6}_{2}}\neq
0$.\\
So, we have\begin{equation*}
d\omega=\Sigma_{i<j<k}\left(\partial_{\theta_{k}}\omega_{ij}+\partial_{\theta_{i}}\omega_{jk}+\partial_{\theta_{j}}\omega_{ki}\right)dx_{k}\wedge
x_{i}\wedge x_{j},\; \forall \;i,j,k\in\left\{1,2,3,4\right\}
\end{equation*}
 We know that $d\omega=0$ if only if
\begin{equation*}
\partial_{x_{k}}\omega_{ij}+\partial_{x_{i}}\omega_{jk}+\partial_{x_{j}}\omega_{ki}=0,\;
\forall \;i,j,k\in\left\{1,2,3,4\right\}
\end{equation*}
with
$x_{1}=\theta_{1},\;x_{2}=\theta_{2},\;x_{3}=\dot{\theta}_{1},\;x_{4}=\dot{\theta}_{2}.$

In this case \begin{equation*}
\partial_{x_{k}}\omega_{ij}+\partial_{x_{i}}\omega_{jk}+\partial_{x_{j}}\omega_{ki}=0,\;
\forall \;i,j,k\in\left\{1,2,3,4\right\}
\end{equation*}
So, $d\omega=0$.\\
 Furthermore, $\nabla$ is  flat, so
the almost complex structure $J$ is integrable. In the coordinates
system
$\left(\eta_{1},\eta_{2},\dot{\theta}_{1},\dot{\theta}_{2}\right)$,
we  have $J^{2}=-Id$.\\
Consider
$X=\left(\eta_{1},\eta_{2},\dot{\theta}_{1},\dot{\theta}_{2}\right)^{T}$,
and show that  $g\left(JX,JY\right)=g\left(X,Y\right)$. We have
$g\left(JX,JY\right)=\left(JX\right)^{T}g\left(JY\right)$.So, we
obtain  $g\left(JX,JY\right)=X^{T}J^{T}gJY$.
 Show that
$J^{T}gJ=g$.
\begin{eqnarray*}
J^{T}gJ&=&\left(
        \begin{array}{cccc}
         0&0 &  2\theta_{1}^{2}-2\theta_{2}  & 2\theta_{1}\theta_{2} \\
          0 &0 & 2\theta_{1}\theta_{2}& 2\theta_{2}^{2}  \\
      \frac{1}{2\theta_{2}} & -\frac{\theta_{1}}{2\theta^{2}_{2}}    &  0 &  0 \\
-\frac{\theta_{1}}{2\theta^{2}_{2}} & \frac{\theta^{2}_{1}-\theta_{2}}{2\theta^{3}_{2}} & 0 &0 \\
        \end{array}
    \right)\left(
        \begin{array}{cccc}
          2\theta_{1}^{2}-2\theta_{2}  & 2\theta_{1}\theta_{2} & 0 & 0 \\
          2\theta_{1}\theta_{2} & 2\theta_{2}^{2} & 0 & 0 \\
          0 & 0 &  -\frac{1}{2\theta_{2}} &  \frac{\theta_{1}}{2\theta^{2}_{2}} \\
0& 0 &  \frac{\theta_{1}}{2\theta^{2}_{2}} & -\frac{\theta^{2}_{1}-\theta_{2}}{2\theta^{3}_{2}} \\
        \end{array}
    \right)\times\\
    &&\left(
        \begin{array}{cccc}
         0&0 & \frac{1}{2\theta_{2}} & -\frac{\theta_{1}}{2\theta^{2}_{2}} \\
          0 &0 & -\frac{\theta_{1}}{2\theta^{2}_{2}} & \frac{\theta^{2}_{1}-\theta_{2}}{2\theta^{3}_{2}} \\
          2\theta_{1}^{2}-2\theta_{2}  & 2\theta_{1}\theta_{2} &  0 &  0 \\
2\theta_{1}\theta_{2}& 2\theta_{2}^{2}  & 0 &0 \\
        \end{array}
    \right)
\end{eqnarray*}
we obtain
\begin{equation*}
J^{T}gJ=\left(
        \begin{array}{cccc}
          2\theta_{1}^{2}-2\theta_{2}  & 2\theta_{1}\theta_{2} & 0 & 0 \\
          2\theta_{1}\theta_{2} & 2\theta_{2}^{2} & 0 & 0 \\
          0 & 0 &  -\frac{1}{2\theta_{2}} &  \frac{\theta_{1}}{2\theta^{2}_{2}} \\
0& 0 &  \frac{\theta_{1}}{2\theta^{2}_{2}} & -\frac{\theta^{2}_{1}-\theta_{2}}{2\theta^{3}_{2}} \\
        \end{array}
    \right)
\end{equation*}We conclude that
$g\left(JX,JY\right)=g\left(X,Y\right)$.\\ Therefore, we have
$d\omega=0$, because $\partial_{x_{k}}\omega_{ij}=0,\; for all
\;i,j,k\in\left\{1,2,3,4\right\}$, with
$x_{1}=\eta_{1},\;x_{2}=\eta_{2},\;x_{3}=\dot{\theta}_{1},\;x_{4}=\dot{\theta}_{2}.$
\end{proof}
 Let $S$
be the lognormal statistical manifold.  Let $\left(TS,g\right)$ be
the K$\ddot{a}$hler manifold with $TS$ the bundle tangent to $S$.
Let $\mathbb{H}$ be the set of complex
 numbers such that\\ $i\mathbb{H}=\left\{z|\mathcal{Re}(z)<0\right\}$, and $\mathbb{H}=\left\{z|Im(z)>0\right\}$.
  In the coordinate system $\left(\theta_{1},\theta_{2},\dot{\theta}_{1},\dot{\theta}_{2}\right)$,
  the K$\ddot{a}$hler-Berndt metric is given by \begin{equation*} g_{KB}:=-\frac{1}{2\theta_{2}}
    \end{equation*}
 The Siegel Jacobi space $\mathbb{S}^{J}$ is defined by
\begin{equation*}
    \mathbb{S}^{J}:=\left(\mathbb{H}\times \mathbb{C},-\frac{1}{2\theta_{2}}\right)
\end{equation*}
$TS\cong \mathbb{S}^{J}$ and we have $TS\cong \mathbb{C}\times
i\mathbb{H}\longrightarrow\mathbb{H}\times
\mathbb{C},\;(z_{1},\;z_{2})\longmapsto (-iz_{2}, iz_{1})$
 with  $z_{1}=\theta_{1}+i\dot{\theta}_{1},\;z_{2}=\theta_{2}+i\dot{\theta}_{2}$.

\begin{proposition}\label{lem1}Let  $i,j,k\in\{1,2\}$, we have
the following result
\begin{eqnarray*}
 \left(\Gamma^{g}_{11} \right)^{1} &=& \left(\Gamma^{g}_{11} \right)^{2}
 =\left(\Gamma^{g}_{12} \right)^{1}=\left(\Gamma^{g}_{12} \right)^{2}=\left(\Gamma^{g}_{21} \right)^{1}
  =\left(\Gamma^{g}_{21} \right)^{2}=\left(\Gamma^{g}_{22} \right)^{1}=\left(\Gamma^{g}_{22}
  \right)^{2}=0.
\end{eqnarray*} with $\left(\Gamma^{g}_{ij} \right)^{k}$ is the
Christoffel's coefficient for the K$\ddot{a}$hler metric.
\end{proposition}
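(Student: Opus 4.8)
The plan is to exploit that $(\theta_{1},\theta_{2})$ is precisely the natural (canonical) parameter of the lognormal family regarded as an exponential family, together with the observation that the affine connection $\nabla$ underlying the Dombrowski construction — the one whose flatness is, by \eqref{eq6}, equivalent to $(TS,g,J,\omega)$ being K$\ddot{a}$hler — is the exponential ($e$-)connection of the dually flat structure on $S$, i.e. the connection for which $\theta$ is an affine chart. From the explicit form \eqref{eq16}, namely $\ell(x,\theta)=-\log x+\theta_{1}\log x+\theta_{2}\log^{2}x-\Phi(\theta)$, one sees that $\ell$ is affine in $(\theta_{1},\theta_{2})$ up to the deterministic term $-\Phi(\theta)$ and a term depending on $x$ only; hence $\partial_{\theta_{i}}\partial_{\theta_{j}}\ell(x,\theta)=-\partial_{\theta_{i}}\partial_{\theta_{j}}\Phi(\theta)$ carries no dependence on $x$. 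This single fact drives the whole argument.

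First I would write the lowered connection coefficients in the $\theta$-chart via the standard information-geometric formula $\Gamma^{g}_{ij,k}=\mathbb{E}_{\theta}\!\left[(\partial_{\theta_{i}}\partial_{\theta_{j}}\ell)(\partial_{\theta_{k}}\ell)\right]$ (Amari, \cite{Shu}), then pull the now deterministic factor $-\partial_{\theta_{i}}\partial_{\theta_{j}}\Phi(\theta)$ outside the expectation and use $\mathbb{E}_{\theta}[\partial_{\theta_{k}}\ell]=\int_{\mathcal{X}}\partial_{\theta_{k}}p_{\theta}\,dx=\partial_{\theta_{k}}\!\int_{\mathcal{X}}p_{\theta}\,dx=0$ to obtain $\Gamma^{g}_{ij,k}=0$ for every $i,j,k\in\{1,2\}$. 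Raising the index with $h^{-1}$ — legitimate since $h$ is non-degenerate, its inverse being exhibited in the proof of Theorem \ref{pro2} — gives $(\Gamma^{g}_{ij})^{k}=\sum_{m}h^{km}\Gamma^{g}_{ij,m}=0$, which is the assertion. Equivalently and more directly: $\theta$ is a $\nabla$-affine coordinate system, so $\nabla_{\partial_{\theta_{i}}}\partial_{\theta_{j}}=0$ identically and all Christoffel coefficients vanish by the very definition of an affine chart.

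The step that requires care — and essentially the only way the computation could go wrong — is correctly identifying the connection: the vanishing is a statement about the flat exponential connection attached to the K$\ddot{a}$hler/dually flat structure, not about the Levi-Civita connection of the Fisher metric $h$, for which, for instance, $(\Gamma_{11})^{1}=\frac{\theta_{1}}{2\theta_{2}}\neq 0$. So the proof must use the exponential-family form of $\ell$ from \eqref{eq16}--\eqref{eq17} and the identity $\mathbb{E}_{\theta}[\partial_{\theta_{k}}\ell]=0$, rather than differentiate the entries of $h$ directly; granting that, the whole proof reduces to the two short displays above.
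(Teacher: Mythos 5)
Your argument is correct, and it is in substance the same fact the paper relies on, but you arrive at it by a different and more self-contained route. The paper's own proof is essentially a citation: it invokes Theorem 4 of \cite{mama} for the vanishing $\Gamma_{ijk}=0$ on the lognormal statistical manifold and then Lemma 2.22 of \cite{dom} to transfer this to the coefficients $\left(\Gamma^{g}_{ij}\right)^{k}$ appearing in the K\"ahler-function system. You instead derive the vanishing directly from the exponential-family form of $\ell$ in (\ref{eq16}): since $\partial_{\theta_{i}}\partial_{\theta_{j}}\ell=-\partial_{\theta_{i}}\partial_{\theta_{j}}\Phi(\theta)$ is deterministic, Amari's formula $\Gamma^{(e)}_{ij,k}=\mathbb{E}_{\theta}\left[(\partial_{\theta_{i}}\partial_{\theta_{j}}\ell)(\partial_{\theta_{k}}\ell)\right]$ collapses to a constant times $\mathbb{E}_{\theta}[\partial_{\theta_{k}}\ell]=0$, and raising the index with the nondegenerate $h^{-1}$ finishes the job; equivalently, $\theta$ is a $\nabla$-affine chart for the exponential connection, so the coefficients vanish by definition. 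What your version buys is transparency and independence from the external reference \cite{mama}; what the paper's version buys is brevity and explicit alignment with the machinery of \cite{dom} (Lemma 2.22) that is reused later in Proposition \ref{pro40}. Your closing caveat --- that the claim concerns the flat exponential connection of the dually flat structure, not the Levi-Civita connection of $h$ or of $g$, despite the statement's phrasing ``Christoffel's coefficient for the K\"ahler metric'' --- is well taken and is exactly the reading consistent with (\ref{eq6}), (\ref{eq8}) and the way these coefficients are used in the proof of Proposition \ref{pro40}.
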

\begin{proof}
Using the proof  of theorem 4 in \cite{mama}, it is show that in the
lognormal statistical manifold the Christoffel coefficient is given
by
\begin{eqnarray}\Gamma_{ijk}=0,\; 1\leq i,j,k\leq 2.\label{eq18}\end{eqnarray} Using (\ref{eq18}) in lemma 2.22 in
\cite{dom}, we have the result.
\end{proof}

\section{K$\ddot{a}$hler function, and holomorphic isometric function.}\label{sec4}
\begin{theorem}\label{pro400}Let $\varphi: TS\longrightarrow TS$ be a diffeomorphism.
Every holomorphic isometric function $\varphi$ on $TS$ whose
elements satisfy $\varphi^{*}g=g$ and $\varphi_{*}J=J\varphi_{*}$ is
given by
\[\displaystyle {\it \varphi(\theta,\dot{\theta})}\, := \, \left( \theta_{{1}}+{\it k_{1}},\theta_{{2}}+{\it k_{2}},\dot{\theta}_{{1}}+{\it k_{3}},\dot{\theta}_{{2}}+{\it k_{4}}\\
\mbox{} \right) \] with $k_{1},\;k_{2},\;k_{3},\;k_{4}\in\mathrm{I\!
R}$.
\end{theorem}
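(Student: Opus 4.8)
The plan is to translate the two hypotheses $\varphi^{*}g=g$ and $\varphi_{*}J=J\varphi_{*}$ into a system of partial differential equations for the component functions $\varphi^{1},\varphi^{2},\varphi^{3},\varphi^{4}$ of $\varphi(\theta,\dot\theta)$ and then to integrate that system using the vanishing of the Christoffel coefficients from Proposition \ref{lem1}. By Theorem \ref{pro2} the almost complex structure $J$ is, in the coordinates $(\theta_{1},\theta_{2},\dot\theta_{1},\dot\theta_{2})$, the constant block matrix $\left(\begin{smallmatrix}0&-I\\ I&0\end{smallmatrix}\right)$, so the condition $\varphi_{*}J=J\varphi_{*}$ says precisely that the Jacobian $D\varphi$ is complex linear, i.e. (Remark \ref{remark10}) that $\varphi^{1}+i\varphi^{3}$ and $\varphi^{2}+i\varphi^{4}$ are holomorphic in $z_{1}=\theta_{1}+i\dot\theta_{1}$ and $z_{2}=\theta_{2}+i\dot\theta_{2}$. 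Writing out $\partial_{\bar z_{k}}(\varphi^{1}+i\varphi^{3})=0$ and $\partial_{\bar z_{k}}(\varphi^{2}+i\varphi^{4})=0$ with $\partial_{\bar z_{k}}=\tfrac12(\partial_{\theta_{k}}+i\partial_{\dot\theta_{k}})$ gives the Cauchy--Riemann relations $\partial_{\theta_{k}}\varphi^{1}=\partial_{\dot\theta_{k}}\varphi^{3}$, $\partial_{\dot\theta_{k}}\varphi^{1}=-\partial_{\theta_{k}}\varphi^{3}$, $\partial_{\theta_{k}}\varphi^{2}=\partial_{\dot\theta_{k}}\varphi^{4}$, $\partial_{\dot\theta_{k}}\varphi^{2}=-\partial_{\theta_{k}}\varphi^{4}$ for $k=1,2$.

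Next I would write the isometry condition as the pointwise matrix identity $(D\varphi)^{T}\,(g\circ\varphi)\,(D\varphi)=g$ with $g$ the explicit matrix of Theorem \ref{pro2}. Since $g$ is block diagonal with two identical blocks $h_{ij}$ (the horizontal and the vertical copy), this unpacks into a finite set of scalar equations in the first partials of the $\varphi^{i}$, which combined with the isometry relations \eqref{lem4} of Remark \ref{remark10} decouple the "base" part $(\varphi^{1},\varphi^{2})$ from the "fibre" part $(\varphi^{3},\varphi^{4})$. The key reduction is Proposition \ref{lem1}: all Christoffel coefficients vanish in the coordinates $(\theta,\dot\theta)$, so these are affine coordinates and the isometry equations become an affine-type system; substituting an affine ansatz $x\mapsto Ax+b$ into $(D\varphi)^{T}(g\circ\varphi)(D\varphi)=g$ and differentiating in $\theta_{2}$, the only variable on which $g$ genuinely depends, together with the Cauchy--Riemann constraints forces the constant linear part $A$ to be the identity. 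Hence $\varphi(\theta,\dot\theta)=(\theta_{1}+k_{1},\theta_{2}+k_{2},\dot\theta_{1}+k_{3},\dot\theta_{2}+k_{4})$ with $k_{1},k_{2},k_{3},k_{4}\in\mathbb{R}$. Conversely, any such translation has $D\varphi=\mathrm{Id}$, which commutes with the constant $J$ and trivially solves the Cauchy--Riemann system, so it is indeed a holomorphic isometry; this gives the stated characterization.

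The step I expect to be the main obstacle is the bookkeeping in the identity $(D\varphi)^{T}(g\circ\varphi)(D\varphi)=g$: because $g$ is not constant, the left-hand side involves $g$ evaluated at the image point $\varphi(p)$, so one has to combine the Cauchy--Riemann relations with the explicit entries of $g$ and with \eqref{lem4} in order to eliminate the nontrivial linear (in particular "rotational") part of $D\varphi$ before the affine rigidity furnished by Proposition \ref{lem1} can be applied cleanly. Once the Jacobian has been pinned to the identity, reading off the four real integration constants $k_{1},k_{2},k_{3},k_{4}$ and checking the converse are routine.
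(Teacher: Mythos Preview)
Your overall plan---turn $\varphi_{*}J=J\varphi_{*}$ into the Cauchy--Riemann relations of Remark~\ref{remark10}, feed those into the isometry identity for $g$, and integrate the resulting first-order system---is exactly the route the paper takes. The paper, however, does not pass through an affine ansatz or invoke Proposition~\ref{lem1}: after using the Cauchy--Riemann relations to rewrite the Jacobian in terms of $\partial\varphi^{1},\partial\varphi^{2}$ alone, it simply multiplies out the matrix identity against the explicit $g$ of Theorem~\ref{pro2}, reads off a system of sixteen scalar equations in the first partials of $\varphi^{1},\varphi^{2}$, and solves that linear system directly to get $\partial_{\theta_{1}}\varphi^{1}=\partial_{\theta_{2}}\varphi^{2}=1$ and all other partials zero; the check against \eqref{lem4} is then a one-line verification at the end.

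The gap in your outline is the ``key reduction'' via Proposition~\ref{lem1}. That proposition only asserts $(\Gamma^{g}_{ij})^{k}=0$ for indices $i,j,k\in\{1,2\}$, i.e.\ for the base-direction block, not for all Christoffel symbols of the four-dimensional K\"ahler metric $g$. Since $g$ visibly depends on $\theta_{1},\theta_{2}$ its full Levi--Civita connection cannot vanish, so $(\theta,\dot\theta)$ are not affine coordinates for $g$ and there is no general principle forcing a holomorphic isometry to be affine \emph{a priori}. Thus the step ``substitute an affine ansatz $x\mapsto Ax+b$'' is not justified by Proposition~\ref{lem1}, and without it your argument reduces to exactly the componentwise bookkeeping in $(D\varphi)^{T}(g\circ\varphi)(D\varphi)=g$ that you flag as the main obstacle---which is precisely the computation the paper carries out explicitly rather than bypasses.
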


\begin{proof}
Let
$\varphi=\left(\varphi^{1},\varphi^{2},\varphi^{3},\varphi^{4}\right)$,
we have the differential of $\varphi$ given by
\begin{eqnarray*}\varphi^{*}(\theta,\dot{\theta}) =\left(
                    \begin{array}{cccc}
                      \frac{\partial \varphi^{1}}{\partial \theta_{1}} & \frac{\partial \varphi^{1}}{\partial \theta_{2}}  & \frac{\partial \varphi^{1}}{\partial \dot{\theta}_{1}} & \frac{\partial \varphi^{1}}{\partial \dot{\theta}_{2}}  \\
                     \frac{\partial \varphi^{2}}{\partial \theta_{1}} & \frac{\partial \varphi^{2}}{\partial \theta_{2}}  & \frac{\partial \varphi^{2}}{\partial \dot{\theta}_{1}} & \frac{\partial \varphi^{2}}{\partial \dot{\theta}_{2}}  \\
                     \frac{\partial \varphi^{3}}{\partial \theta_{1}} & \frac{\partial \varphi^{3}}{\partial \theta_{2}}  & \frac{\partial \varphi^{3}}{\partial \dot{\theta}_{1}} & \frac{\partial \varphi^{3}}{\partial \dot{\theta}_{2}}  \\
                     \frac{\partial \varphi^{4}}{\partial \theta_{1}} & \frac{\partial \varphi^{4}}{\partial \theta_{2}}  & \frac{\partial \varphi^{4}}{\partial \dot{\theta}_{1}} & \frac{\partial \varphi^{4}}{\partial \dot{\theta}_{2}}  \\
                    \end{array}
                  \right)\end{eqnarray*}
                   Using remark \ref{remark10}, for that
                  $\varphi$ becomes holomorphic function we obtain
                  \begin{eqnarray*}
                        \frac{\partial \varphi^{1}}{\partial \theta_{1}} &=&  \frac{\partial \varphi^{3}}{\partial \dot{\theta}_{1}};\; \frac{ \partial \varphi^{1}}{\partial \theta_{2}} =  \frac{\partial \varphi^{3}}{\partial \dot{\theta}_{2}} \\
                        \frac{ \partial \varphi^{2}}{\partial \theta_{1}} &=& \frac{ \partial \varphi^{4}}{\partial \dot{\theta}_{1}};\; \frac{\partial \varphi^{2}}{\partial \theta_{2}} =  \frac{\partial \varphi^{4}}{\partial \dot{\theta}_{2}} \\
    \frac{\partial \varphi^{1}}{\partial \dot{\theta}_{1}} &=& - \frac{\partial \varphi^{3}}{\partial \theta_{1}};\;  \frac{\partial \varphi^{1}}{\partial \dot{\theta}_{2}} = - \frac{\partial \varphi^{3}}{\partial \theta_{2}} \\
    \frac{\partial \varphi^{2}}{\partial \dot{\theta}_{1}} &=& - \frac{\partial \varphi^{4}}{\partial \theta_{2}};\; \frac{\partial \varphi^{2}}{\partial \dot{\theta}_{1}} = - \frac{\partial \varphi^{4}}{\partial \theta_{2}}
                  \end{eqnarray*}
                  So, we obtain the following matrix  $\varphi^{*}(\theta,\dot{\theta}) =\left(
                    \begin{array}{cccc}
                      \frac{\partial \varphi^{1}}{\partial \theta_{1}} & \frac{\partial \varphi^{1}}{\partial \theta_{2}}  & \frac{\partial \varphi^{1}}{\partial \dot{\theta}_{1}} & \frac{\partial \varphi^{1}}{\partial \dot{\theta}_{2}}  \\
                     \frac{\partial \varphi^{2}}{\partial \theta_{1}} & \frac{\partial \varphi^{2}}{\partial \theta_{2}}  & \frac{\partial \varphi^{2}}{\partial \dot{\theta}_{1}} & \frac{\partial \varphi^{2}}{\partial \dot{\theta}_{2}}  \\
                     - \frac{\partial \varphi^{1}}{\partial \dot{\theta}_{1}}  & - \frac{\partial \varphi^{1}}{\partial \dot{\theta}_{2}}& \frac{\partial \varphi^{1}}{\partial \theta_{1}} & \frac{\partial \varphi^{1}}{\partial \theta_{2}}  \\
                     - \frac{\partial \varphi^{2}}{\partial \dot{\theta}_{1}} & - \frac{\partial \varphi^{2}}{\partial \dot{\theta}_{1}} & \frac{\partial \varphi^{2}}{\partial \theta_{1}} & \frac{\partial \varphi^{2}}{\partial \theta_{2}}   \\
                    \end{array}
                  \right)$\\
Using $\varphi^{*}g=g$  we have
\begin{equation*}
    \left(
                    \begin{array}{cccc}
                      \frac{\partial \varphi^{1}}{\partial \theta_{1}} & \frac{\partial \varphi^{1}}{\partial \theta_{2}}  & \frac{\partial \varphi^{1}}{\partial \dot{\theta}_{1}} & \frac{\partial \varphi^{1}}{\partial \dot{\theta}_{2}}  \\
                     \frac{\partial \varphi^{2}}{\partial \theta_{1}} & \frac{\partial \varphi^{2}}{\partial \theta_{2}}  & \frac{\partial \varphi^{2}}{\partial \dot{\theta}_{1}} & \frac{\partial \varphi^{2}}{\partial \dot{\theta}_{2}}  \\
                     - \frac{\partial \varphi^{1}}{\partial \dot{\theta}_{1}}  & - \frac{\partial \varphi^{1}}{\partial \dot{\theta}_{2}}& \frac{\partial \varphi^{1}}{\partial \theta_{1}} & \frac{\partial \varphi^{1}}{\partial \theta_{2}}  \\
                     - \frac{\partial \varphi^{2}}{\partial \dot{\theta}_{1}} & - \frac{\partial \varphi^{2}}{\partial \dot{\theta}_{1}} & \frac{\partial \varphi^{2}}{\partial \theta_{1}} & \frac{\partial \varphi^{2}}{\partial \theta_{2}}   \\
                    \end{array}
                  \right)\left(
        \begin{array}{cccc}
          -\frac{1}{2\theta_{2}} & \frac{\theta_{1}}{2\theta^{2}_{2}} & 0 & 0 \\
          \frac{\theta_{1}}{2\theta^{2}_{2}} & -\frac{\theta^{2}_{1}-\theta_{2}}{2\theta^{3}_{2}} & 0 & 0 \\
          0 & 0 &  -\frac{1}{2\theta_{2}} &  \frac{\theta_{1}}{2\theta^{2}_{2}} \\
0& 0 &  \frac{\theta_{1}}{2\theta^{2}_{2}} & -\frac{\theta^{2}_{1}-\theta_{2}}{2\theta^{3}_{2}} \\
        \end{array}
    \right)=\left(
        \begin{array}{cccc}
          -\frac{1}{2\theta_{2}} & \frac{\theta_{1}}{2\theta^{2}_{2}} & 0 & 0 \\
          \frac{\theta_{1}}{2\theta^{2}_{2}} & -\frac{\theta^{2}_{1}-\theta_{2}}{2\theta^{3}_{2}} & 0 & 0 \\
          0 & 0 &  -\frac{1}{2\theta_{2}} &  \frac{\theta_{1}}{2\theta^{2}_{2}} \\
0& 0 &  \frac{\theta_{1}}{2\theta^{2}_{2}} & -\frac{\theta^{2}_{1}-\theta_{2}}{2\theta^{3}_{2}} \\
        \end{array}
    \right)
\end{equation*}
Using  $\varphi_{*}J=J\varphi_{*}$, we obtain the following system
\begin{equation*}
    \left\{
      \begin{array}{ll}
        -\frac{1}{2\theta_{2}}\frac{\partial \varphi^{1}}{\partial \theta_{1}} + \frac{\theta_{1}}{2\theta^{2}_{2}}\frac{\partial \varphi^{1}}{\partial \theta_{2}}=-\frac{1}{2\theta_{2}}& \hbox{} \\
 \frac{\theta_{1}}{2\theta^{2}_{2}}\frac{\partial \varphi^{1}}{\partial \theta_{1}}   -\frac{\theta^{2}_{1}-\theta_{2}}{2\theta^{3}_{2}}\frac{\partial \varphi^{1}}{\partial \theta_{2}}=\frac{\theta_{1}}{2\theta^{2}_{2}} & \hbox{} \\
-\frac{1}{2\theta_{2}}\frac{\partial \varphi^{1}}{\partial \dot{\theta}_{1}}  + \frac{\theta_{1}}{2\theta^{2}_{2}}\frac{\partial \varphi^{1}}{\partial \dot{\theta}_{2}}=0& \hbox{} \\
\frac{\theta_{1}}{2\theta^{2}_{2}}\frac{\partial \varphi^{1}}{\partial \dot{\theta}_{1}}   -\frac{\theta^{2}_{1}-\theta_{2}}{2\theta^{3}_{2}}\frac{\partial \varphi^{1}}{\partial \dot{\theta}_{2}}=0& \hbox{} \\
        -\frac{1}{2\theta_{2}}\frac{\partial \varphi^{2}}{\partial \theta_{1}} + \frac{\theta_{1}}{2\theta^{2}_{2}}\frac{\partial \varphi^{2}}{\partial \theta_{2}}=\frac{\theta_{1}}{2\theta^{2}_{2}}& \hbox{} \\
 \frac{\theta_{1}}{2\theta^{2}_{2}}\frac{\partial \varphi^{2}}{\partial \theta_{1}}   -\frac{\theta^{2}_{1}-\theta_{2}}{2\theta^{3}_{2}}\frac{\partial \varphi^{2}}{\partial \theta_{2}}=-\frac{\theta^{2}_{1}-\theta_{2}}{2\theta^{3}_{2}} & \hbox{} \\
-\frac{1}{2\theta_{2}}\frac{\partial \varphi^{2}}{\partial \dot{\theta}_{1}}  + \frac{\theta_{1}}{2\theta^{2}_{2}}\frac{\partial \varphi^{2}}{\partial \dot{\theta}_{2}}=0& \hbox{} \\
\frac{\theta_{1}}{2\theta^{2}_{2}}\frac{\partial \varphi^{2}}{\partial \dot{\theta}_{1}}   -\frac{\theta^{2}_{1}-\theta_{2}}{2\theta^{3}_{2}}\frac{\partial \varphi^{2}}{\partial \dot{\theta}_{2}}=0& \hbox{} \\
 \frac{1}{2\theta_{2}}\frac{\partial \varphi^{1}}{\partial \dot{\theta}_{1}} - \frac{\theta_{1}}{2\theta^{2}_{2}}\frac{\partial \varphi^{1}}{\partial \dot{\theta}_{2}}=0& \hbox{} \\
 -\frac{\theta_{1}}{2\theta^{2}_{2}}\frac{\partial \varphi^{1}}{\partial \dot{\theta}_{1}}   +\frac{\theta^{2}_{1}-\theta_{2}}{2\theta^{3}_{2}}\frac{\partial \varphi^{1}}{\partial \dot{\theta}_{2}}=0 & \hbox{} \\
-\frac{1}{2\theta_{2}}\frac{\partial \varphi^{1}}{\partial \theta_{1}}  + \frac{\theta_{1}}{2\theta^{2}_{2}}\frac{\partial \varphi^{1}}{\partial \theta_{2}}=-\frac{1}{2\theta_{2}}& \hbox{} \\
\frac{\theta_{1}}{2\theta^{2}_{2}}\frac{\partial \varphi^{1}}{\partial \theta_{1}}   -\frac{\theta^{2}_{1}-\theta_{2}}{2\theta^{3}_{2}}\frac{\partial \varphi^{1}}{\partial \theta_{2}}=\frac{\theta_{1}}{2\theta^{2}_{2}}& \hbox{} \\
\frac{1}{2\theta_{2}}\frac{\partial \varphi^{2}}{\partial \dot{\theta}_{1}} - \frac{\theta_{1}}{2\theta^{2}_{2}}\frac{\partial \varphi^{2}}{\partial \dot{\theta}_{2}}=0& \hbox{} \\
-\frac{\theta_{1}}{2\theta^{2}_{2}}\frac{\partial \varphi^{2}}{\partial \dot{\theta}_{1}}   +\frac{\theta^{2}_{1}-\theta_{2}}{2\theta^{3}_{2}}\frac{\partial \varphi^{2}}{\partial \dot{\theta}_{2}}=0 & \hbox{} \\
-\frac{1}{2\theta_{2}}\frac{\partial \varphi^{2}}{\partial \theta_{1}}  + \frac{\theta_{1}}{2\theta^{2}_{2}}\frac{\partial \varphi^{2}}{\partial \theta_{2}}=\frac{\theta_{1}}{2\theta^{2}_{2}}& \hbox{} \\
\frac{\theta_{1}}{2\theta^{2}_{2}}\frac{\partial
\varphi^{2}}{\partial \theta_{1}}
-\frac{\theta^{2}_{1}-\theta_{2}}{2\theta^{3}_{2}}\frac{\partial
\varphi^{2}}{\partial
\theta_{2}}=-\frac{\theta^{2}_{1}-\theta_{2}}{2\theta^{3}_{2}}&
\hbox{}
      \end{array}
    \right.
\end{equation*}

we have, \begin{eqnarray*}
  \varphi^{1}(\theta,\dot{\theta})&=&\theta_{1}+{\it k_{1}},\varphi^{2}(\theta,\dot{\theta})=
  \theta_{2}+{\it k_{2}},\varphi^{3}(\theta,\dot{\theta})=\dot{\theta}_{1}+{\it k_{3}},\varphi^{4}(\theta,\dot{\theta})=\dot{\theta}_{2}+{\it k_{4}}
\end{eqnarray*}
So, we have \[\displaystyle {\it \varphi(\theta,\dot{\theta})}\, :=
\, \left( \theta_{{1}}+{\it k_{1}},\theta_{{2}}+{\it
k_{2}},\dot{\theta}_{{1}}
+{\it k_{3}},\dot{\theta}_{{2}}+{\it k_{4}}\\
\mbox{} \right) \] the homomorphic function.  With
$k_{1},k_{2},k_{3},k_{4}\in \mathrm{I\! R}$. Therefore, this
function $\varphi$ verify lemma \ref{lem4}. So,
$\varphi(\theta,\dot{\theta})$ is holomorphic isometric function.
\end{proof}

\begin{proposition}\label{pro40}
Let  $L\in \left\{F,G,H,H,P,Q\in\mathfrak{g}^{J}\right\}$ the
Hamiltonian vector field $X_{J}L$ on Siegel-Jacobi space is given by
\begin{eqnarray*}
  X_{\psi(F)}(\theta,\dot{\theta})&=&\left(0,0,0,0\right),\;\;\;\;\; X_{\psi(P)}(\theta,\dot{\theta})=\left(2\theta_{1}^{2}-2\theta_{2},2\theta_{1}\theta_{2},-2\theta_{1}\theta_{2},-2\theta_{2}^{2}\right)\\
  X_{\psi(G)}(\theta,\dot{\theta})&=&\left(2\theta_{1}^{2}-2\theta_{2} +2\theta_{1}\theta_{2}\dot{\theta}_{2},2\theta_{1}\theta_{2}+2\theta_{2}^{2}\dot{\theta}_{2},-2\theta_{1}\theta_{2}(\theta_{2}+1),-2\theta_{2}^{2}(\theta_{2}+1)\right),\\
  X_{\psi(H)}(\theta,\dot{\theta})&=&\left(0,0,-2\theta_{1}^{2}+2\theta_{2} ,-2\theta_{1}\theta_{2}\right),\;\;\;\;\;X_{\psi(R)}(\theta,\dot{\theta})=\left(0,0,0,0\right)\\
  X_{\psi(Q)}(\theta,\dot{\theta})&=&\left(0,0,-2\theta_{1}\theta_{2},-2\theta_{2}^{2} \right)
\end{eqnarray*}
\end{proposition}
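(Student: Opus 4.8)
The plan is to compute each Hamiltonian vector field $X_{J}L$ directly from the defining relation $\iota_{X_{J}L}\,\omega = dL$ (equivalently $b_{g}(X_{J}L) = dL$), using the explicit $\omega$ from Theorem \ref{pro2} in the $(\eta_{1},\eta_{2},\dot\theta_{1},\dot\theta_{2})$ chart where $\omega$ has the constant block form $\left(\begin{smallmatrix} 0 & I \\ -I & 0\end{smallmatrix}\right)$, so that $X_{J}L$ reads off cleanly from the partials of $L$. The first step is to pin down, for each generator $L\in\{F,G,H,P,Q,R\}$ of the Siegel--Jacobi Lie algebra $\mathfrak{g}^{J}$, the concrete Kähler function $\psi(L)$ on $TS\cong\mathbb{S}^{J}$ attached to it by the momentum map; this is where the earlier identifications $TS\cong\mathbb{S}^{J}$ and $z_{1}=\theta_{1}+i\dot\theta_{1}$, $z_{2}=\theta_{2}+i\dot\theta_{2}$ enter, together with the fact (Proposition \ref{lem1}) that all Christoffel symbols of the Kähler metric vanish, so the Kähler condition \eqref{eq8} reduces to the flat conditions $\partial^{2}f/\partial x_{i}\partial x_{j} = \partial^{2}f/\partial\dot x_{i}\partial\dot x_{j}$ and $\partial^{2}f/\partial x_{i}\partial\dot x_{j} = \partial^{2}f/\partial x_{j}\partial\dot x_{i}$ — i.e. the Kähler functions are exactly the harmonic-type polynomials, and the $\psi(L)$ are low-degree ones.

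Next I would, generator by generator, write $\psi(L)$ in the $\theta,\dot\theta$ coordinates and apply the formula for the gradient with respect to $g$: $X_{J}L = J\,\mathrm{grad}_{g}(\psi(L))$, using the explicit $g^{-1}$ computed in the proof of Theorem \ref{pro2}, namely the block $\left(\begin{smallmatrix} 2\theta_1^2-2\theta_2 & 2\theta_1\theta_2 \\ 2\theta_1\theta_2 & 2\theta_2^2\end{smallmatrix}\right)$ on each factor, and the explicit $J$ from \eqref{eq15}. For the ``static'' generators ($F$ and $R$) one expects $d\psi(L)=0$ or $\psi(L)$ constant along the relevant directions, giving $X_{\psi(F)}=X_{\psi(R)}=(0,0,0,0)$; for $P$, $H$, $Q$ the function $\psi(L)$ is linear/quadratic in a single pair of coordinates, producing the stated vector fields after one matrix multiplication by $g^{-1}$ (and a sign/transposition from $J$); for $G$ the function mixes $\theta$ and $\dot\theta$, which is why $X_{\psi(G)}$ picks up the extra $\dot\theta_{2}$-dependent terms $2\theta_1\theta_2\dot\theta_2$ and $-2\theta_1\theta_2(\theta_2+1)$, etc. Each case is then just bookkeeping: substitute, multiply the $2\times 2$ inverse-metric block against the gradient covector, apply $J$, and collect.

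The main obstacle I anticipate is not the linear algebra but fixing the correct normalization and explicit form of the Kähler functions $\psi(L)$ associated with the Siegel--Jacobi generators — the map $\psi:\mathfrak{g}^{J}\to C^{\infty}_{\mathrm{K\ddot ahler}}(TS)$ must be chosen so that the images satisfy the Kähler equations \eqref{eq8} (automatic here since the Christoffels vanish, so really the constraint is just polynomial degree and the harmonic-pairing condition) and so that the Poisson brackets reproduce the Lie brackets of $\mathfrak{g}^{J}$; getting the coefficients of $\log(x)$, $z_{1}$, $z_{2}$ right in each $\psi(L)$ is what determines whether, e.g., $X_{\psi(G)}$ comes out with the factor $(\theta_{2}+1)$ rather than $\theta_{2}$. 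Once those six functions are written down correctly and consistently with the coordinate change $TS\cong\mathbb{S}^{J}$, the rest is a routine verification: differentiate, apply $g^{-1}$, apply $J$, and read off the six displayed vector fields, checking in particular that $X_{\psi(F)}$ and $X_{\psi(R)}$ vanish and that the three partially-vanishing fields $X_{\psi(H)}$ and $X_{\psi(Q)}$ have zero horizontal ($\theta$) components as claimed.
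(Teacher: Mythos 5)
Your overall strategy is the same one the paper uses: take the Kähler function $\psi(L)$ attached to each generator of $\mathfrak{g}^{J}$, and compute its Hamiltonian vector field with the inverse Fisher metric block $h^{ij}=\left(\begin{smallmatrix} 2\theta_1^2-2\theta_2 & 2\theta_1\theta_2 \\ 2\theta_1\theta_2 & 2\theta_2^2\end{smallmatrix}\right)$; the paper phrases this as $X_{f}=h^{ij}\frac{\partial f}{\partial\dot{\theta}_{i}}\frac{\partial}{\partial\theta_{j}}-h^{ij}\frac{\partial f}{\partial\theta_{i}}\frac{\partial}{\partial\dot{\theta}_{j}}$ in the $(\theta,\dot\theta)$ chart, which is equivalent to your $\iota_{X}\omega=dL$ formulation in the $(\eta,\dot\theta)$ chart. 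You also correctly use Proposition~\ref{lem1} to reduce the Kähler conditions \eqref{eq8} to the flat system, exactly as the paper does before exhibiting the general quadratic family of Kähler functions.

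The genuine gap is that you never write down the six functions $\psi(L)$, and you yourself identify this as the step on which everything hinges (your remark about whether $X_{\psi(G)}$ carries the factor $(\theta_2+1)$ or $\theta_2$). Without those functions the displayed vector fields cannot be verified, since the entire content of the proposition is the specific output of the computation. The paper resolves this simply by \emph{defining} the linear map $\psi:\mathfrak{g}^{J}\to C^{\infty}(\mathbb{S}^{J})$ by $\psi(F)=0$, $\psi(P)=\theta_{2}+\dot{\theta}_{1}$, $\psi(G)=\tfrac{1}{2}\theta_{2}^{2}+\theta_{2}+\dot{\theta}_{1}+\tfrac{1}{2}\dot{\theta}_{2}^{2}$, $\psi(H)=\theta_{1}$, $\psi(Q)=\theta_{2}$, $\psi(R)=-\tfrac{1}{4}$ (each lying in the quadratic family of Kähler functions obtained from the flat system), and then the six formulas follow by the routine substitution you describe. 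Your proposed route of fixing $\psi$ by demanding that Poisson brackets reproduce the Lie brackets of $\mathfrak{g}^{J}$ is more principled than the paper's bare definition, but it is not carried out, and it is nontrivial extra work (one would have to set up the structure constants of $\mathfrak{g}^{J}=\mathfrak{sl}(2,\mathrm{I\!R})\ltimes\mathfrak{h}$ and solve for the coefficients); as it stands the proposal establishes the method but not the result.
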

\begin{proof}
In the coordinate system
$\left(\theta_{1},\theta_{2},\dot{\theta}_{1},\dot{\theta}_{2}\right)$,
using equation (\ref{eqq})  we have the following system
\begin{equation}\label{syst3}
\left\{
  \begin{array}{ll}
    \frac{\partial^{2} f}{\partial\theta_{1}\partial\theta_{1}} -\frac{\partial^{2} f}{\partial\dot{\theta}_{1}\partial\dot{\theta}_{1}}
    =2\left(\Gamma^{h}_{11} \right)^{1}\circ \pi\frac{\partial f}{\partial\theta_{1}}+2\left(\Gamma^{h}_{11} \right)^{2}\circ \pi\frac{\partial f}{\partial\theta_{1}}& \hbox{} \\
     \frac{\partial^{2} f}{\partial\theta_{1}\partial\theta_{2}} -\frac{\partial^{2} f}{\partial\dot{\theta}_{1}\partial\dot{\theta}_{2}}
    =2\left(\Gamma^{h}_{12} \right)^{1}\circ \pi\frac{\partial f}{\partial\theta_{1}}+2\left(\Gamma^{h}_{12} \right)^{2}\circ \pi\frac{\partial f}{\partial\theta_{2}} & \hbox{} \\
    \frac{\partial^{2} f}{\partial\theta_{2}\partial\theta_{1}} -\frac{\partial^{2} f}{\partial\dot{\theta}_{2}\partial\dot{\theta}_{1}}
    =2\left(\Gamma^{h}_{21} \right)^{1}\circ \pi\frac{\partial f}{\partial\theta_{1}}+2\left(\Gamma^{h}_{21} \right)^{2}\circ \pi\frac{\partial f}{\partial\theta_{2}} & \hbox{} \\
    \frac{\partial^{2} f}{\partial\theta_{2}\partial\theta_{2}} -\frac{\partial^{2} f}{\partial\dot{\theta}_{2}\partial\dot{\theta}_{2}}
    =2\left(\Gamma^{h}_{22} \right)^{1}\circ \pi\frac{\partial f}{\partial\theta_{1}}+2\left(\Gamma^{h}_{22} \right)^{2}\circ \pi\frac{\partial f}{\partial\theta_{2}} & \hbox{} \\
    \frac{\partial^{2} f}{\partial\theta_{1}\partial\dot{\theta}_{1}} +\frac{\partial^{2} f}{\partial\theta_{1}\partial\dot{\theta}_{1}}
    =2\left(\Gamma^{h}_{11} \right)^{1}\circ \pi\frac{\partial f}{\partial\dot{\theta}_{1}}+2\left(\Gamma^{h}_{11} \right)^{2}\circ \pi\frac{\partial f}{\partial\dot{\theta}_{2}} & \hbox{} \\
    \frac{\partial^{2} f}{\partial\theta_{1}\partial\dot{\theta}_{2}} +\frac{\partial^{2} f}{\partial\theta_{2}\partial\dot{\theta}_{1}}
    =2\left(\Gamma^{h}_{12} \right)^{1}\circ \pi\frac{\partial f}{\partial\dot{\theta}_{1}}+2\left(\Gamma^{h}_{12} \right)^{2}\circ \pi\frac{\partial f}{\partial\dot{\theta}_{2}} & \hbox{} \\
    \frac{\partial^{2} f}{\partial\theta_{2}\partial\dot{\theta}_{1}} +\frac{\partial^{2} f}{\partial\theta_{1}\partial\dot{\theta}_{2}}
    =2\left(\Gamma^{h}_{21} \right)^{1}\circ \pi\frac{\partial f}{\partial\dot{\theta}_{1}}+2\left(\Gamma^{h}_{21} \right)^{2}\circ \pi\frac{\partial f}{\partial\dot{\theta}_{2}} & \hbox{} \\
    \frac{\partial^{2} f}{\partial\theta_{2}\partial\dot{\theta}_{2}} +\frac{\partial^{2} f}{\partial\theta_{2}\partial\dot{\theta}_{2}}
    =2\left(\Gamma^{h}_{22} \right)^{1}\circ \pi\frac{\partial f}{\partial\dot{\theta}_{1}}+2\left(\Gamma^{h}_{22} \right)^{2}\circ \pi\frac{\partial f}{\partial\dot{\theta}_{2}} & \hbox{}
  \end{array}
\right.
\end{equation}

Using the lemma 2.22 in \cite{dom}, we have the following system

\begin{equation}\label{eq19}
\left\{
  \begin{array}{ll}
    \frac{\partial^{2} f}{\partial\theta_{1}\partial\theta_{1}} -\frac{\partial^{2} f}{\partial\dot{\theta}_{1}\partial\dot{\theta}_{1}}
    =2\left(\Gamma^{g}_{11} \right)^{1}\frac{\partial f}{\partial\theta_{1}}+2\left(\Gamma^{g}_{11} \right)^{2}\frac{\partial f}{\partial\theta_{1}}& \hbox{} \\
     \frac{\partial^{2} f}{\partial\theta_{1}\partial\theta_{2}} -\frac{\partial^{2} f}{\partial\dot{\theta}_{1}\partial\dot{\theta}_{2}}
    =2\left(\Gamma^{g}_{12} \right)^{1}\frac{\partial f}{\partial\theta_{1}}+2\left(\Gamma^{g}_{12} \right)^{2}\frac{\partial f}{\partial\theta_{2}} & \hbox{} \\
    \frac{\partial^{2} f}{\partial\theta_{2}\partial\theta_{1}} -\frac{\partial^{2} f}{\partial\dot{\theta}_{2}\partial\dot{\theta}_{1}}
    =2\left(\Gamma^{g}_{21} \right)^{1}\frac{\partial f}{\partial\theta_{1}}+2\left(\Gamma^{g}_{21} \right)^{2}\frac{\partial f}{\partial\theta_{2}} & \hbox{} \\
    \frac{\partial^{2} f}{\partial\theta_{2}\partial\theta_{2}} -\frac{\partial^{2} f}{\partial\dot{\theta}_{2}\partial\dot{\theta}_{2}}
    =2\left(\Gamma^{g}_{22} \right)^{1}\frac{\partial f}{\partial\theta_{1}}+2\left(\Gamma^{g}_{22} \right)^{2}\frac{\partial f}{\partial\theta_{2}} & \hbox{} \\
    \frac{\partial^{2} f}{\partial\theta_{1}\partial\dot{\theta}_{1}} +\frac{\partial^{2} f}{\partial\theta_{1}\partial\dot{\theta}_{1}}
    =2\left(\Gamma^{g}_{11} \right)^{1}\frac{\partial f}{\partial\dot{\theta}_{1}}+2\left(\Gamma^{g}_{11} \right)^{2}\frac{\partial f}{\partial\dot{\theta}_{2}} & \hbox{} \\
    \frac{\partial^{2} f}{\partial\theta_{1}\partial\dot{\theta}_{2}} +\frac{\partial^{2} f}{\partial\theta_{2}\partial\dot{\theta}_{1}}
    =2\left(\Gamma^{g}_{12} \right)^{1}\frac{\partial f}{\partial\dot{\theta}_{1}}+2\left(\Gamma^{g}_{12} \right)^{2}\frac{\partial f}{\partial\dot{\theta}_{2}} & \hbox{} \\
    \frac{\partial^{2} f}{\partial\theta_{2}\partial\dot{\theta}_{1}} +\frac{\partial^{2} f}{\partial\theta_{1}\partial\dot{\theta}_{2}}
    =2\left(\Gamma^{g}_{21} \right)^{1}\frac{\partial f}{\partial\dot{\theta}_{1}}+2\left(\Gamma^{g}_{21} \right)^{2}\frac{\partial f}{\partial\dot{\theta}_{2}} & \hbox{} \\
    \frac{\partial^{2} f}{\partial\theta_{2}\partial\dot{\theta}_{2}} +\frac{\partial^{2} f}{\partial\theta_{2}\partial\dot{\theta}_{2}}
    =2\left(\Gamma^{g}_{22} \right)^{1}\frac{\partial f}{\partial\dot{\theta}_{1}}+2\left(\Gamma^{g}_{22} \right)^{2}\frac{\partial f}{\partial\dot{\theta}_{2}} & \hbox{}
  \end{array}
\right.\end{equation} using lemma \ref{lem1}, we have the following
system
\begin{equation}\label{eq20}
\left\{
  \begin{array}{ll}
    \frac{\partial^{2} f}{\partial\theta_{1}\partial\theta_{1}} -\frac{\partial^{2} f}{\partial\dot{\theta}_{1}\partial\dot{\theta}_{1}}
    =0& \hbox{} \\
     \frac{\partial^{2} f}{\partial\theta_{1}\partial\theta_{2}} -\frac{\partial^{2} f}{\partial\dot{\theta}_{1}\partial\dot{\theta}_{2}}
    =0 & \hbox{} \\
    \frac{\partial^{2} f}{\partial\theta_{2}\partial\theta_{1}} -\frac{\partial^{2} f}{\partial\dot{\theta}_{2}\partial\dot{\theta}_{1}}
    =0 & \hbox{} \\
    \frac{\partial^{2} f}{\partial\theta_{2}\partial\theta_{2}} -\frac{\partial^{2} f}{\partial\dot{\theta}_{2}\partial\dot{\theta}_{2}}
    =0 & \hbox{} \\
    \frac{\partial^{2} f}{\partial\theta_{1}\partial\dot{\theta}_{1}} +\frac{\partial^{2} f}{\partial\theta_{1}\partial\dot{\theta}_{1}}
    =0 & \hbox{} \\
    \frac{\partial^{2} f}{\partial\theta_{1}\partial\dot{\theta}_{2}} +\frac{\partial^{2} f}{\partial\theta_{2}\partial\dot{\theta}_{1}}
    =0 & \hbox{} \\
    \frac{\partial^{2} f}{\partial\theta_{2}\partial\dot{\theta}_{1}} +\frac{\partial^{2} f}{\partial\theta_{1}\partial\dot{\theta}_{2}}
    =0 & \hbox{} \\
    \frac{\partial^{2} f}{\partial\theta_{2}\partial\dot{\theta}_{2}} +\frac{\partial^{2} f}{\partial\theta_{2}\partial\dot{\theta}_{2}}
    =0 & \hbox{}
  \end{array}
\right.\end{equation}

We obtain the following family K$\ddot{a}$hler function define by
\[\displaystyle
    f(\theta,\dot{\theta})= \frac{1}{2}\,{\it \alpha_{1}}\,{\theta_{{1}}}^{2}
    +\frac{1}{2}\, \left( 2\,{\it \alpha_{2}}\,\theta_{{2}}-2\,{\it \alpha_{4}}\,\theta_{{4}}+2\,{\it \alpha_{5}} \right) \theta_{{1}}
    +\frac{1}{2}\,{\it \alpha_{9}}\,{\theta_{{2}}}^{2}\\
\mbox{}+\frac{1}{2}\, \left( 2\,\theta_{{3}}{\it \alpha_{4}}+2\,{\it
\alpha_{6}} \right) \theta_{{2}}\]
\[\displaystyle+\frac{1}{2}\,{\it\alpha_{1}}\,{\theta_{{3}}}^{2}+\frac{1}{2}\, \left(
2\,{\it \alpha_{3}}+2\,{\it \alpha_{2}}\,\theta_{{4}} \right)
\theta_{{3}}
\] with
$\alpha_{1},\;\alpha_{2},\;\alpha_{3},\;\alpha_{4},\;\alpha_{5},\;\alpha_{6}\in\mathrm{I\! R}$.\\

Having this in mind, let $C^{\infty}(\mathbb{S}^{J} )$ denote the
space of smooth functions on the Siegel-Jacobi space
$\mathbb{S}^{J}$. Using the  coordinates
$(\theta_{1},\theta_{2},\dot{\theta}_{1},\dot{\theta}_{2})$ on
$\mathbb{S}^{J}\cong TS$, we define a linear map $\psi:
\mathfrak{g}^{J}\longrightarrow C^{\infty}(\mathbb{S}^{J} )$ as
follows
\begin{eqnarray*}
  F&\longmapsto&0,\;\;\;\;\;P\longmapsto\theta_{2}+\dot{\theta}_{1}\\
  G&\longmapsto&\frac{1}{2}\theta^{2}_{2}+\theta_{2}+\dot{\theta}_{1}+\frac{1}{2}\dot{\theta}^{2}_{2},\;\;\;\;\;Q\longmapsto \theta_{2}\\
  H&\longmapsto&\theta_{1},\;\;\;\;\;R\longmapsto-\frac{1}{4}.
\end{eqnarray*}
Using the formula $X_{f}(\theta,\dot{\theta})=h^{ij}\frac{\partial
f}{\partial\dot{\theta}_{i}}\frac{\partial
}{\partial\theta_{j}}-h^{ij}\frac{\partial f
}{\partial\theta_{i}}\frac{\partial }{\partial\dot{\theta}_{j}}$,
and using (\ref{eq12}) the Hamiltonian vector fields $X_{J}(L)$ is
given by
\begin{eqnarray*}
  X_{\psi(F)}(\theta,\dot{\theta})&=&\left(0,0,0,0\right),\;\;\;\;\; X_{\psi(P)}(\theta,\dot{\theta})=\left(2\theta_{1}^{2}-2\theta_{2},2\theta_{1}\theta_{2},-2\theta_{1}\theta_{2},-2\theta_{2}^{2}\right)\\
  X_{\psi(G)}(\theta,\dot{\theta})&=&\left(2\theta_{1}^{2}-2\theta_{2} +2\theta_{1}\theta_{2}\dot{\theta}_{2},2\theta_{1}\theta_{2}+2\theta_{2}^{2}\dot{\theta}_{2},-2\theta_{1}\theta_{2}(\theta_{2}+1),-2\theta_{2}^{2}(\theta_{2}+1)\right),\\
  X_{\psi(H)}(\theta,\dot{\theta})&=&\left(0,0,-2\theta_{1}^{2}+2\theta_{2} ,-2\theta_{1}\theta_{2}\right),\;\;\;\;\;X_{\psi(R)}(\theta,\dot{\theta})=\left(0,0,0,0\right)\\
  X_{\psi(Q)}(\theta,\dot{\theta})&=&\left(0,0,-2\theta_{1}\theta_{2},-2\theta_{2}^{2} \right)
\end{eqnarray*}

\end{proof}

\section{Shr$\ddot{o}$dinger equation on Jacobi space.}\label{sec5}
Let $\mathbb{S}^{J}$ the Siegel-Jacobi Space. Let $
\mathcal{H}:=L^{2}(\mathrm{I\! R})$  be the Hilbert space of
square-integrable functions.  By setting $t=(z_{1},\;z_{2}), \;
I=\mathbb{C}\times i\mathbb{H}$, We define the exponential
application
\begin{eqnarray*}
  \Psi:  \mathbb{S}^{J}&\longrightarrow& L^{2}(\mathrm{I\! R}) \\
  \alpha(t)&\mapsto& \exp\left\{\frac{1}{2}\left(c(x)-iz_{2}\log(x)+iz_{1}\log^{2}(x)-\Phi(\theta)\right)\right\}
\end{eqnarray*}
with $c(x)=-\log(x)-i\log(x)$,\; $\Phi(\theta)=-
\frac{\theta^{2}_{1}}{4\theta_{2}}-\frac{1}{2}\log(-2\theta_{2})+\frac{1}{2}\log(2\pi)$.
We have   $\Phi(t):= \Psi\left(\alpha(t)\right)$.  We have
$\alpha:I\longrightarrow \mathbb{H}\times
\mathbb{C}=\mathbb{S}^{J},\; \alpha(t)\longmapsto(-iz_{2},\;z_{1})$,
we have the following theorem

\begin{theorem}\label{pro4} Let $\mathbb{S}^{J}$ be a Jacobian space and $\mathfrak{g}^{J}$ be the Lie algebra
 of the Jacobian group $G^{J}(\mathrm{I\! R})$.  $\mathfrak{s}L(2,\mathrm{I\! R})$ and $\mathfrak{h}$ are
 respectively the Lie algebra of the special linear group $SL(2,\mathrm{I\! R})$ of dimension $3$ and the Heisenberg group $Heis$.
 For every smooth map $\kappa:\mathbb{S}^{J}\times \mathfrak{g}^{J}\longrightarrow\mathbb{C}$
 there exists $\alpha:I \longrightarrow \mathbb{H}\times \mathbb{C}=\mathbb{S}^{J},\; \alpha(t)\longmapsto(-iz_{2},\;z_{1})$
 an integral curve of the Hamiltonian vector field $X_{J}L$, and constant application of $K_{L}\Phi(t):=\kappa\left(\alpha(t),L\right)=cste$,
 with $t=(z_{1},\;z_{2}), \; I=\mathbb{C}\times i\mathbb{H}$, and satisfying the following equation on the Jacobi space $\mathbb{S}^{J}$  is given by
\begin{eqnarray}\label{eq21}
    \frac{id
\Phi(t)}{dt}&=&\frac{1}{2}\left(-\lambda_{1}\log^{2}(x)-\lambda_{2}\left(2iz_{1}+\left(-iz_{2}+2iz_{1}\log(x)\right)^{2}\right)\right)\Phi(t)\nonumber\\
&+&\frac{1}{2}\left(\lambda_{3}\left(2\log(x)z_{2}-4z_{1}\log^{2}(x)+i\right)+\lambda_{4}\left(-z_{2}+2z_{1}\log(x)\right)\Phi(t)\right)\\
&+&\frac{1}{2}\left(\xi(\theta,\dot{\theta},x)\log(x)-\frac{1}{4}\lambda_{5}\right)\Phi(t)+\frac{1}{2}\left(\lambda_{1}K_{F}\Phi(t)
+\lambda_{2}K_{G}\Phi(t)+\lambda_{3}K_{H}\Phi(t)\right.\nonumber\\
&+&\left.\lambda_{4}K_{P}\Phi(t)+\xi(\theta,\dot{\theta},x)K_{Q}\Phi(t)+\lambda_{5}K_{R}\Phi(t)\right)\nonumber
\end{eqnarray}
with \begin{eqnarray*}
    \xi(\theta,\dot{\theta},x)&=&
    \frac{\left(-4i\ddot{\theta}_{1}\log^{2}(x)
    -4i\log(x)\ddot{\theta}_{2}
     +4\lambda_{1}\log^{2}(x)
       +8i\lambda_{2}\dot{\theta}_{1}\theta_{2}+\lambda_{5}-4i\lambda_{3}\right)\varphi(t)}{4\left(\beta_{5}+\log(x)\right)\varphi(t)}\\
 &+&\frac{\left(+16\lambda_{3}\theta_{1}\log^{2}(x)
 -8\lambda_{4}\theta_{1}\log(x)
 +8\lambda_{2}\theta_{1}\theta_{2}
 +8i\lambda_{2}\theta_{1}+4i\lambda_{4}\dot{\theta}_{2}
-8\lambda_{2}\dot{\theta}_{2}\dot{\theta}_{1}\right)\Phi(t)}{4\left(\beta_{5}+\log(x)\right)\Phi(t)}\\
 &&+\frac{\left(2\lambda_{2}\dot{\theta}^{2}_{2}
 +8\lambda_{2}\dot{\theta}_{1}+4\lambda_{4}\theta_{2}
 -8\lambda_{2}\dot{\theta}_{1}-2\lambda_{2}\theta^{2}_{2}-8\lambda_{2}\theta^{2}_{1}
 -4\dot{\theta}_{1}\log^{2}(x)-4\dot{\theta}_{2}\log(x)\right)\Phi(t)}{4\left(\beta_{5}+\log(x)\right)\Phi(t)}\\
 &&+\frac{
 \left(-8\lambda_{3}\left(\theta_{2}+i\dot{\theta}_{2}\right)\log(x)
 +8\lambda_{2}\theta_{1}\dot{\theta}_{2}
 -16i\lambda_{2}\theta_{1}\dot{\theta}_{1}
 +16i\lambda_{3}\dot{\theta}_{1}\log^{2}(x)
 \right.}{4\left(\beta_{5}+\log(x)\right)\Phi(t)}\\
&&+\frac{
 \left.
 -8i\lambda_{4}\dot{\theta}_{2}
 -4i\lambda_{2}\theta_{2}\dot{\theta}_{2}\right)\Phi(t)}{4\left(\beta_{5}+\log(x)\right)\Phi(t)}\\
&+&\frac{-4\lambda_{1}\beta_{1}-4\lambda_{2}\beta_{2}
-4\lambda_{2}\beta_{2}
-4\lambda_{3}\beta_{3}-4\lambda_{4}\beta_{4}-4\lambda_{5}\beta_{6}
}{4\left(\beta_{5}+\log(x)\right)\Phi(t)}.
\end{eqnarray*} Where,
 $\lambda_{1}, \;\lambda_{2}, \; \lambda_{3}, \; \lambda_{4},
\;\lambda_{5},\;\beta_{1},\;\beta_{2},\;\beta_{3},\;\beta_{4},\;\beta_{5},\;\beta_{6}\in\mathrm{I\!R}
$.
\end{theorem}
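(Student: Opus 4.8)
The plan is to derive equation (\ref{eq21}) by differentiating the coherent-state path $\Phi(t)=\Psi(\alpha(t))$ directly along an integral curve of $X_{J}L$ and then matching the outcome, term by term, against the Hamiltonian vector fields produced in Proposition \ref{pro40}. First I would fix $L\in\mathfrak{g}^{J}$ and expand it in the basis $(F,G,H,P,Q,R)$ of $\mathfrak{g}^{J}$,
\[
L=\lambda_{1}F+\lambda_{2}G+\lambda_{3}H+\lambda_{4}P+\xi(\theta,\dot{\theta},x)\,Q+\lambda_{5}R,\qquad\lambda_{i}\in\mathrm{I\! R}.
\]
Because the assignment $L\mapsto X_{J}L$ is $\mathrm{I\! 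R}$-linear, Proposition \ref{pro40} immediately supplies $X_{J}L$ in the coordinates $(\theta_{1},\theta_{2},\dot{\theta}_{1},\dot{\theta}_{2})$ as the corresponding linear combination of $X_{\psi(F)},\dots,X_{\psi(R)}$. An integral curve $\alpha$ then solves $\dot{\alpha}=X_{J}L\circ\alpha$; passing through $z_{1}=\theta_{1}+i\dot{\theta}_{1}$, $z_{2}=\theta_{2}+i\dot{\theta}_{2}$ and the identification $TS\cong\mathbb{C}\times i\mathbb{H}\to\mathbb{H}\times\mathbb{C}$, $(z_{1},z_{2})\mapsto(-iz_{2},iz_{1})$, this yields closed expressions for $\dot{z}_{1}(t)$ and $\dot{z}_{2}(t)$ in terms of $z_{1},z_{2}$ and the constants $\lambda_{i},\xi$.

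Next I would use that $\Psi(\alpha(t))=\exp\{E(t)\}$ with $E(t)=\tfrac{1}{2}\bigl(c(x)-iz_{2}\log(x)+iz_{1}\log^{2}(x)-\Phi(\theta)\bigr)$ and $\Phi(\theta)=-\theta_{1}^{2}/(4\theta_{2})-\tfrac{1}{2}\log(-2\theta_{2})+\tfrac{1}{2}\log(2\pi)$, so that the chain rule gives
\[
\frac{i\,d\Phi(t)}{dt}=i\,\Phi(t)\,\frac{dE(t)}{dt}=\frac{i}{2}\,\Phi(t)\Bigl(-i\dot{z}_{2}\log(x)+i\dot{z}_{1}\log^{2}(x)-\tfrac{d}{dt}\Phi(\theta)\Bigr).
\]
Substituting the curve equations from the previous step for $\dot{z}_{1},\dot{z}_{2}$, expanding $\tfrac{d}{dt}\Phi(\theta)$ through $\theta_{1}(t),\theta_{2}(t)$, and then regrouping the result by the coefficients $\lambda_{1},\dots,\lambda_{5},\xi$ should reproduce the right-hand side of (\ref{eq21}) block by block: the $\lambda_{1}$-part gives $-\lambda_{1}\log^{2}(x)$, the $\lambda_{2}$-part $-\lambda_{2}\bigl(2iz_{1}+(-iz_{2}+2iz_{1}\log x)^{2}\bigr)$, the $\lambda_{3}$- and $\lambda_{4}$-parts the indicated $H$- and $P$-terms, the $\xi$-part $\xi(\theta,\dot{\theta},x)\log(x)$, and the $\lambda_{5}$-part $-\tfrac{1}{4}\lambda_{5}$.

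The terms that survive this regrouping without collapsing into a scalar function of $t$ times $\Phi(t)$ are precisely those collected as $\tfrac{1}{2}\bigl(\lambda_{1}K_{F}\Phi(t)+\lambda_{2}K_{G}\Phi(t)+\lambda_{3}K_{H}\Phi(t)+\lambda_{4}K_{P}\Phi(t)+\xi(\theta,\dot{\theta},x)K_{Q}\Phi(t)+\lambda_{5}K_{R}\Phi(t)\bigr)$; identifying $K_{L}\Phi(t):=\kappa(\alpha(t),L)$ — which, along the chosen integral curve, is the constant the statement posits — with these residual contributions finishes the derivation of (\ref{eq21}). I expect the main obstacle to be the determination of the function $\xi(\theta,\dot{\theta},x)$: it is not free but is pinned down by a closure (solvability) condition, namely that the coefficient of $Q$ must be chosen so that the second-order terms $\ddot{\theta}_{1},\ddot{\theta}_{2}$ — which appear once the curve equations are differentiated a second time — together with the integration constants $\beta_{1},\dots,\beta_{6}$ collapse into the stated first-order form. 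Solving that single linear relation for $\xi$ and simplifying yields the displayed expression for $\xi(\theta,\dot{\theta},x)$, and this is the only step that requires an extended, albeit entirely mechanical, computation.
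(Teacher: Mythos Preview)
Your plan diverges from the paper at the point where the right-hand side of (\ref{eq21}) is supposed to emerge, and as written it cannot reproduce those terms. The displayed blocks $-\lambda_{1}\log^{2}(x)$, $-\lambda_{2}\bigl(2iz_{1}+(-iz_{2}+2iz_{1}\log x)^{2}\bigr)$, $\ldots$, $-\tfrac14\lambda_{5}$ are not obtained by feeding the Hamiltonian flows of Proposition~\ref{pro40} into $i\,dE/dt$; they are the values of an \emph{operator} representation $Q:\mathfrak{g}^{J}\to\mathrm{End}\bigl(C^{\infty}(\mathrm{I\! R},\mathbb{C})\bigr)$ acting on the coherent state. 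The paper defines this map explicitly, $Q(F)=-\log^{2}(x)$, $Q(G)=-\partial/\partial\log^{2}(x)$, $Q(H)=2i(\log x\,\partial_{\log x}+\tfrac12 I)$, $Q(P)=-i\partial_{\log x}$, $Q(Q)=\log x$, $Q(R)=-\tfrac14 I$, then computes each $Q(L)\Phi(t)$, writes the ansatz $\tfrac{id\Phi}{dt}=\sum_{L}c_{L}\bigl(\tfrac12 Q(L)\Phi+\tfrac12 K_{L}\Phi\bigr)$, and matches it against the raw chain-rule formula $\tfrac{id\Phi}{dt}=\tfrac12\bigl(-\log^{2}(x)(\dot\theta_{1}+i\ddot\theta_{1})+\log(x)(-\dot\theta_{2}-i\ddot\theta_{2})\bigr)\Phi$. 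Solving that linear relation is what fixes $c_{F}=\lambda_{1},\dots,c_{R}=\lambda_{5}$, $K_{L}\Phi=\beta_{L}$ constant, and the stated $\xi$.

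Your route via $X_{J}L$ already fails on the basis elements whose moment-map images are constants: by Proposition~\ref{pro40}, $X_{\psi(F)}=(0,0,0,0)=X_{\psi(R)}$ because $\psi(F)=0$ and $\psi(R)=-\tfrac14$. Hence the $\lambda_{1}$- and $\lambda_{5}$-parts of the integral curve vanish identically, and substituting them into your expression for $i\,dE/dt$ contributes zero, whereas (\ref{eq21}) requires $-\lambda_{1}\log^{2}(x)$ and $-\tfrac14\lambda_{5}$ on the right. The correspondence you are tacitly invoking---that the time derivative of a coherent state along the classical flow of $L$ equals a quantum operator applied to it modulo a scalar---needs precisely the representation $Q$ (a Schr\"odinger/metaplectic-type quantization of $\mathfrak{g}^{J}$) to formulate, and that is the ingredient missing from the proposal. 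Once $Q$ is in place the rest of your outline (chain rule, regrouping, solving one linear relation for $\xi$) is essentially what the paper does.
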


\begin{proof}
 $End\left(\mathcal{C}^{\infty}(\mathrm{I\! R},\mathbb{C}
)\right)$ denotes the space of $\mathbb{C}$-linear endomorphisms of
$\mathcal{C}^{\infty}(\mathrm{I\! R},\mathbb{C} )$, and let
\begin{eqnarray}\label{eq22}
  Q:\mathfrak{g}^{J}&\longrightarrow& End\left(C^{\infty}(\mathrm{I\! R},\mathbb{C}
  )\right)\nonumber\\
  F&\longmapsto&-\log^{2}(x),\;\;\;\;\;P\longmapsto-i\frac{\partial}{\partial \log(x)}\\
  G&\longmapsto&-\frac{\partial}{\partial \log^{2}(x)},\;\;\;\;\;Q\longmapsto \log(x)\nonumber\\\\
  H&\longmapsto&2i\left(\log(x) \frac{\partial}{\partial
  \log(x)}+\frac{1}{2}I\right),\;\;\;\;\;R\longmapsto-\frac{1}{4}I.\nonumber\\
\end{eqnarray}
 be the linear map.
Let $\mathbb{S}^{J}$ the Siegel-Jacobi Space. Let $
\mathcal{H}:=L^{2}(\mathrm{I\! R})$  be the Hilbert space of
square-integrable functions.  By setting $t=(z_{1},\;z_{2}), \;
I=\mathbb{C}\times i\mathbb{H}$, We define the exponential
application
\begin{eqnarray*}
  \Psi:  \mathbb{S}^{J}&\longrightarrow& L^{2}(\mathrm{I\! R}) \\
  \alpha(t)&\mapsto& \exp\left\{\frac{1}{2}\left(c(x)-iz_{2}\log(x)+iz_{1}\log^{2}(x)-\Phi(\theta)\right)\right\}
\end{eqnarray*}
with $c(x)=-\log(x)-i\log(x)$,\; $\Phi(\theta)=-
\frac{\theta^{2}_{1}}{4\theta_{2}}-\frac{1}{2}\log(-2\theta_{2})+\frac{1}{2}\log(2\pi)$.
We have   $\Phi(t):= \Psi\left(\alpha(t)\right)$.  We have
$\alpha:I\longrightarrow \mathbb{H}\times
\mathbb{C}=\mathbb{S}^{J},\; \alpha(t)\longmapsto(-iz_{2},\;z_{1})$,
we find the application $K_{L}$ such that the following
Shr$\ddot{o}$dinger equation is verify. By definition $\Phi(t):=
\Psi\left(\alpha(t)\right)$, and we have

\begin{eqnarray*}
\frac{d  \Phi(t)}{dt}&=&\frac{d  \Psi\left(\alpha(t)\right)}{dt}
\end{eqnarray*}

So,\begin{eqnarray*} \frac{id
\Psi\left(\alpha(t)\right)}{dt}&=&i\frac{\partial
\Psi\left(\alpha(t)\right)}{\partial (iz_{2})}\frac{d  (iz_{2})}{dt}
+i\frac{\partial \Psi\left(\alpha(t)\right)}{\partial (z_{1})}\frac{d  (z_{1})}{dt}\\
&=&i\frac{\partial \Psi\left(\alpha(t)\right)}{\partial
(iz_{2})}\left(\frac{\partial (iz_{2})}{\partial \theta_{1}}\frac{d
\theta_{1}}{dt}+\frac{\partial (iz_{2})}{\partial \theta_{2}}\frac{d
\theta_{2}}{dt}+\frac{\partial (iz_{2})}{\partial
\dot{\theta}_{1}}\frac{d \dot{\theta}_{1}}{dt}+\frac{\partial
(iz_{2})}{\partial \dot{\theta}_{2}}\frac{d
\dot{\theta}_{2}}{dt}\right)\\
 &+&i\frac{\partial \Psi\left(\alpha(t)\right)}{\partial
(z_{1})}\left(\frac{\partial (z_{1})}{\partial \theta_{1}}\frac{d
\theta_{1}}{dt}+\frac{\partial (z_{1})}{\partial \theta_{2}}\frac{d
\theta_{2}}{dt}+\frac{\partial (z_{1})}{\partial
\dot{\theta}_{1}}\frac{d \dot{\theta}_{1}}{dt}+\frac{\partial
(z_{1})}{\partial \dot{\theta}_{2}}\frac{d
\dot{\theta}_{2}}{dt}\right)\end{eqnarray*}So, we obtain the
following expression
\begin{eqnarray}\label{eq23}
 \frac{id
\Psi\left(\alpha(t)\right)}{dt}
&=&\frac{1}{2}\left(-\log^{2}(x)(\dot{\theta}_{1}+i\ddot{\theta}_{1})+\log(x)(-\dot{\theta}_{2}-i\ddot{\theta}_{2})\right)\Psi
(\alpha(t))
\end{eqnarray}

we have
\begin{eqnarray*}
  Q(F)\Phi(t) &=& -\log^{2}(x)\Psi (\alpha(t)),\;\;\;\;\; Q(P)\Phi(t)= \left(-z_{2}+2z_{1}\log(x)\right)\Psi(\alpha(t)) \\
   Q(G)\Phi(t) &=& -\left(2iz_{1}+\left(-iz_{2}+2iz_{1}\log(x)\right)^{2}\right)\Psi  (\alpha(t)),\;\;\;\;\; Q(Q)\Phi(t)= \log(x)\Psi(\alpha(t)) \\
   Q(H)\Phi(t) &=& \left(2\log(x)z_{2}-4z_{1}\log^{2}(x)+i\right)\Psi  (\alpha(t)),\;\;\;\;\; Q(R)\Phi(t) = -\frac{1}{4}\Psi(\alpha(t))
\end{eqnarray*}

Taking linear combination of the following Hermitian operators
acting on $\mathcal{C}^{\infty}(\mathrm{I\! R},\mathbb{C} )$. So we
have
\begin{eqnarray*}
    \frac{id
\Psi\left(\alpha(t)\right)}{dt}&=&A\left(-\frac{1}{2}\log^{2}(x)\Psi\left(\alpha(t)\right)+\frac{1}{2}
K_{F}\Psi\left(\alpha(t)\right)\right)\\
&+&B\left(-\frac{1}{2}\left(2iz_{1}+\left(-iz_{2}+2iz_{1}\log(x)\right)^{2}\right)\Psi(\alpha(t))+\frac{1}{2}K_{G}\Psi\left(\alpha(t)\right)\right)\\
&+&C\left(\frac{1}{2}\left(2\log(x)z_{2}-4z_{1}\log^{2}(x)+i\right)\Psi
(\alpha(t))+\frac{1}{2}K_{H}\Psi\left(\alpha(t)\right)\right)\\
&+&D\left(\frac{1}{2}\left(-z_{2}+2z_{1}\log(x)\right)\Psi(\alpha(t))+\frac{1}{2}K_{P}\Psi\left(\alpha(t)\right)\right)\\
&+&\xi\left(\frac{1}{2}\log(x)\Psi(\alpha(t))+\frac{1}{2}K_{Q}\Psi\left(\alpha(t)\right)\right)+F\left(-\frac{1}{8}\Psi(\alpha(t))+\frac{1}{2}K_{R}\Psi\left(\alpha(t)\right)\right)
\end{eqnarray*}
Using (\ref{eq23}), we obtain
\begin{eqnarray*}
    A&=&\lambda_{1}, \; B=\lambda_{2}, \; C=\lambda_{3}, \; D=\lambda_{4}, \; F=\lambda_{5}\\
    \xi&=&\frac{\left(\lambda_{5}-
    4i\lambda_{2}\theta_{2}\dot{\theta}_{2}
    -4i\lambda_{3}+8\theta_{2}\log(x)-8\theta_{1}\log^{2}(x)
    -8\lambda_{2}\dot{\theta}_{2}\right) \Psi(\alpha(t))}{4\left(\beta_{5}+\log(x)\Psi(\alpha(t))\right)}\\
 &+&\frac{\left(-2\lambda_{2}\theta^{2}_{2}-
    8\lambda_{2}\theta^{2}_{1}-8\lambda_{3}\left(\theta_{2}+i\dot{\theta}_{2}\right)
 +8\lambda_{2}\dot{\theta}^{2}_{1}
 +2\lambda_{2}\dot{\theta}^{2}_{2}
  +4\lambda_{4}\theta_{2} \right)\Psi(\alpha(t))}{4\left(\beta_{5}+\log(x)\Psi(\alpha(t))\right)}\\
&+&\frac{\left( +4\lambda_{1}\log^{2}(x)
-8\lambda_{2}\dot{\theta}_{1}\dot{\theta}_{2}
 +8\lambda_{2}\theta_{2}\theta_{1}+4i\lambda_{4}\dot{\theta}_{2}
 -8i\lambda_{3}\dot{\theta}_{1}\log^{2}(x)
 \right)\Psi(\alpha(t))}{4\left(\beta_{5}+\log(x)\Psi(\alpha(t))\right)}\\
&&+\frac{\left(
  +8i\dot{\theta}_{2}\log(x)+8i\lambda_{2}\theta_{1}
-8\lambda_{4}\theta_{1}\log(x) +16\lambda_{3}\theta_{1}\log^{2}(x)\right)\Psi(\alpha(t))}{4\left(\beta_{5}+\log(x)\Psi(\alpha(t))\right)}\\
&&+\frac{\left( +8i\lambda_{2}\theta_{2}\dot{\theta}_{1}
+8i\lambda_{2}\theta_{1}\dot{\theta}_{1}
-16i\lambda_{2}\theta_{1}\dot{\theta}_{1}+
16i\lambda_{3}\dot{\theta}_{1}\log^{2}(x)\right)\Psi(\alpha(t))}{4\left(\beta_{5}+\log(x)\Psi(\alpha(t))\right)}\\
&+&\frac{-4\lambda_{4}\beta_{4}-4\lambda_{5}\beta_{6}
-4\lambda_{3}\beta_{3}-4\lambda_{2}\beta_{2}}{4\left(\beta_{5}+\log(x)\Psi(\alpha(t))\right)}\\
    K_{F}\Psi  (\alpha(t))&=&\beta_{1},\;K_{G}\Psi  (\alpha(t))=\beta_{2},\;K_{H}\Psi  (\alpha(t))=\beta_{3},\;K_{P}\Psi  (\alpha(t))=\beta_{4},\\
    K_{Q}\Psi  (\alpha(t))&=&\beta_{5},\;K_{R}\Psi  (\alpha(t))=\beta_{6}
\end{eqnarray*}
with
 $\lambda_{1}, \;\lambda_{2}, \; \lambda_{3}, \; \lambda_{4},
\;\lambda_{5},\;\beta_{1},\;\beta_{2},\;\beta_{3},\;\beta_{4},\;\beta_{5},\;\beta_{6}\in\mathrm{I\!R}
$.

We have  the following equation

\begin{eqnarray*}
    \frac{id
\Psi
(\alpha(t))}{dt}&=&\frac{1}{2}\left(-\lambda_{1}\log^{2}(x)-\lambda_{2}\left(2iz_{1}
    +\left(-iz_{2}+2iz_{1}\log(x)\right)^{2}\right)\right)\Psi(\alpha(t))\\
&+&\frac{1}{2}\left(\lambda_{3}\left(2\log(x)z_{2}-4z_{1}\log^{2}(x)+i\right)+\lambda_{4}\left(-z_{2}+2z_{1}\log(x)\right)\Psi(\alpha(t))\right)\\
&+&\frac{1}{2}\left(\xi(\theta,\dot{\theta},x)\log(x)-\frac{1}{4}\lambda_{5}\right)\Psi(\alpha(t))+\frac{1}{2}\left(\lambda_{1}K_{F}\Psi
(\alpha(t))+\lambda_{2}K_{G}\Psi  (\alpha(t))\right.\\
&+&\frac{1}{2}\lambda_{3}K_{H}\Psi
(\alpha(t))+\frac{1}{2}\left.\lambda_{4}K_{P}\Psi
(\alpha(t))+\frac{1}{2}\xi(\theta,\dot{\theta},x)K_{Q}\Psi
(\alpha(t))+\frac{1}{2}\lambda_{5}K_{R}\Psi  (\alpha(t))\right)
\end{eqnarray*}\end{proof}
We have the following theorem.\\
 Let\begin{eqnarray*}
  \Psi:  \mathbb{S}^{J}&\longrightarrow& L^{2}(\mathrm{I\! R}) \\
  \alpha(t)&\mapsto& \exp\left\{\frac{1}{2}\left(c(x)-iz_{2}\log(x)+iz_{1}\log^{2}(x)-\Phi(\theta)\right)\right\}
\end{eqnarray*} the exponential application define by lognormal
distribution.
\begin{theorem}\label{th2} Let $TS$ be the H$\ddot{a}$hler manifold.
 Let $\mathbb{S}^{J}$ be a Jacobi space and $\mathfrak{g}^{J}$ be the
  Lie algebra of the Jacobi group $G^{J}(\mathrm{I\! R})$.
  If the unique application
 $K_{L}\Phi(t):=\kappa\left(\alpha(t),L\right)$ is zero, then the
 equation (\ref{eq21}), the Hamiltonian equation on the Jacobi space $\mathbb{S}^{J}$, is given by
\begin{eqnarray*}
    \frac{id
\Phi(t)}{dt}&=&\frac{1}{2}\mathcal{H}(\theta,\dot{\theta},x)\Phi(t)\end{eqnarray*}
with
\begin{eqnarray*}
    \mathcal{H}(\theta,\dot{\theta},x)&=&-\lambda_{1}\log^{2}(x)-\lambda_{2}\left(2iz_{1}
    +\left(-iz_{2}+2iz_{1}\log(x)\right)^{2}\right)\\
&+&\lambda_{3}\left(2\log(x)z_{2}-4z_{1}\log^{2}(x)+i\right)+\lambda_{4}\left(-z_{2}+2z_{1}\log(x)\right)\\
&+&\xi(\theta,\dot{\theta},x)\log(x)-\frac{1}{4}\lambda_{5}.
\end{eqnarray*}
where
 \begin{eqnarray*}
    \xi(\theta,\dot{\theta},x)&=&
    \frac{\left(-4i\ddot{\theta}_{1}\log^{2}(x)
    -4i\log(x)\ddot{\theta}_{2}
     +4\lambda_{1}\log^{2}(x)
       +8i\lambda_{2}\dot{\theta}_{1}\theta_{2}+\lambda_{5}-4i\lambda_{3}\right)\varphi(t)}{4\left(\beta_{5}+\log(x)\right)\varphi(t)}\\
 &+&\frac{\left(+16\lambda_{3}\theta_{1}\log^{2}(x)
 -8\lambda_{4}\theta_{1}\log(x)
 +8\lambda_{2}\theta_{1}\theta_{2}
 +8i\lambda_{2}\theta_{1}+4i\lambda_{4}\dot{\theta}_{2}
-8\lambda_{2}\dot{\theta}_{2}\dot{\theta}_{1}\right)\Phi(t)}{4\left(\beta_{5}+\log(x)\right)\Phi(t)}\\
 &&+\frac{\left(2\lambda_{2}\dot{\theta}^{2}_{2}
 +8\lambda_{2}\dot{\theta}_{1}+4\lambda_{4}\theta_{2}
 -8\lambda_{2}\dot{\theta}_{1}-2\lambda_{2}\theta^{2}_{2}-8\lambda_{2}\theta^{2}_{1}
 -4\dot{\theta}_{1}\log^{2}(x)-4\dot{\theta}_{2}\log(x)\right)\Phi(t)}{4\left(\beta_{5}+\log(x)\right)\Phi(t)}\\
 &&+\frac{
 \left(-8\lambda_{3}\left(\theta_{2}+i\dot{\theta}_{2}\right)\log(x)
 +8\lambda_{2}\theta_{1}\dot{\theta}_{2}
 -16i\lambda_{2}\theta_{1}\dot{\theta}_{1}
 +16i\lambda_{3}\dot{\theta}_{1}\log^{2}(x)
 \right.}{4\left(\beta_{5}+\log(x)\right)\Phi(t)}\\
&&+\frac{
 \left.
 -8i\lambda_{4}\dot{\theta}_{2}
 -4i\lambda_{2}\theta_{2}\dot{\theta}_{2}\right)\Phi(t)}{4\left(\beta_{5}+\log(x)\right)\Phi(t)}\\
&+&\frac{-4\lambda_{1}\beta_{1}-4\lambda_{2}\beta_{2}
-4\lambda_{2}\beta_{2}
-4\lambda_{3}\beta_{3}-4\lambda_{4}\beta_{4}-4\lambda_{5}\beta_{6}
}{4\left(\beta_{5}+\log(x)\right)\Phi(t)}
\end{eqnarray*},
with
$z_{1}=\theta_{1}+i\dot{\theta}_{1},\;z_{2}=\theta_{2}+i\dot{\theta}_{2}$.
\end{theorem}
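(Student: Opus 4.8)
The plan is to obtain Theorem~\ref{th2} as an immediate specialization of Theorem~\ref{pro4}. By Theorem~\ref{pro4}, the function $\Phi(t)=\Psi(\alpha(t))$ associated with the lognormal exponential map satisfies the evolution equation~(\ref{eq21}) along the integral curve $\alpha$ of the Hamiltonian vector field $X_{J}L$, and in that equation the six scalars $K_{F}\Phi(t),K_{G}\Phi(t),K_{H}\Phi(t),K_{P}\Phi(t),K_{Q}\Phi(t),K_{R}\Phi(t)$ are precisely the evaluations $\kappa(\alpha(t),L)$ of the chosen smooth map $\kappa$ along $\alpha$.

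First I would impose the hypothesis $K_{L}\Phi(t):=\kappa(\alpha(t),L)=0$ for every $L\in\{F,G,H,P,Q,R\}\subset\mathfrak{g}^{J}$. Substituting this into~(\ref{eq21}) annihilates the whole trailing bracket
\[
\tfrac12\bigl(\lambda_{1}K_{F}\Phi(t)+\lambda_{2}K_{G}\Phi(t)+\lambda_{3}K_{H}\Phi(t)+\lambda_{4}K_{P}\Phi(t)+\xi(\theta,\dot{\theta},x)K_{Q}\Phi(t)+\lambda_{5}K_{R}\Phi(t)\bigr),
\]
so the right-hand side of~(\ref{eq21}) collapses to $\tfrac12$ times
\[
-\lambda_{1}\log^{2}(x)-\lambda_{2}\bigl(2iz_{1}+(-iz_{2}+2iz_{1}\log(x))^{2}\bigr)+\lambda_{3}\bigl(2\log(x)z_{2}-4z_{1}\log^{2}(x)+i\bigr)+\lambda_{4}(-z_{2}+2z_{1}\log(x))+\xi(\theta,\dot{\theta},x)\log(x)-\tfrac14\lambda_{5},
\]
which is exactly the expression $\mathcal{H}(\theta,\dot{\theta},x)$ in the statement; hence $\dfrac{i\,d\Phi(t)}{dt}=\dfrac12\mathcal{H}(\theta,\dot{\theta},x)\Phi(t)$. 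I would then cross-check this identification against the linear map~(\ref{eq22}): since $Q(F)\Phi=-\log^{2}(x)\Phi$, $Q(G)\Phi=-(2iz_{1}+(-iz_{2}+2iz_{1}\log x)^{2})\Phi$, $Q(H)\Phi=(2\log(x)z_{2}-4z_{1}\log^{2}(x)+i)\Phi$, $Q(P)\Phi=(-z_{2}+2z_{1}\log(x))\Phi$, $Q(Q)\Phi=\log(x)\Phi$ and $Q(R)\Phi=-\tfrac14\Phi$, one reads off $\mathcal{H}\Phi=(\lambda_{1}Q(F)+\lambda_{2}Q(G)+\lambda_{3}Q(H)+\lambda_{4}Q(P)+\xi Q(Q)+\lambda_{5}Q(R))\Phi$, confirming that the surviving term is the action on $\Phi$ of the operator $\mathcal{H}$, which is genuinely time-dependent because both $\xi$ and the $z_{i}$ vary along $\alpha$.

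The only delicate point is to check that imposing $K_{L}\Phi(t)=0$ for all $L$ is consistent with the formula for $\xi(\theta,\dot{\theta},x)$ carried over from Theorem~\ref{pro4}. In that proof one had $K_{Q}\Psi(\alpha(t))=\beta_{5}$ occurring in the denominator $\beta_{5}+\log(x)$ of $\xi$, together with $K_{F}\Psi(\alpha(t))=\beta_{1},\dots,K_{R}\Psi(\alpha(t))=\beta_{6}$ in its numerator; the hypothesis therefore sets $\beta_{5}=0$, reducing the denominator to $\log(x)$, which does not vanish for $x\neq 1$, so $\xi$ remains well defined, while the remaining $\beta_{i}$ vanish (which only simplifies the displayed $\xi$). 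I expect this bookkeeping — tracking which constants $\beta_{i}$ persist and verifying that no division by zero is introduced — to be the main, and essentially the only, obstacle; once it is settled, Theorem~\ref{th2} follows from~(\ref{eq21}) by direct substitution.
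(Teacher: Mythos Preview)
Your proposal is correct and follows essentially the same approach as the paper: the paper's own proof is the single line ``Using equation~(\ref{eq23}) and~\ref{pro4}, we have the result,'' i.e.\ Theorem~\ref{th2} is obtained from Theorem~\ref{pro4} by imposing $K_{L}\Phi(t)=0$ in~(\ref{eq21}), exactly as you do. Your additional bookkeeping (the cross-check against the operators $Q(\cdot)$ of~(\ref{eq22}) and the consistency check on the $\beta_{i}$ in the denominator of $\xi$) goes beyond what the paper spells out but is in the same spirit.
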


\begin{proof} Using equation (\ref{eq23}) and
\ref{pro4}, we have the result.
\end{proof}

We have the following theorem
\begin{theorem}\label{th1}
Let $TS$ be a K$\ddot{a}$hler manifold.  Let $\mathbb{S}^{J}$ be a
Jacobi space and $\mathfrak{g}^{J}$ be the Lie algebra of the Jacobi
group $G^{J}(\mathrm{I\! R})$. The equation (\ref{eq21}), is the
Shr$\ddot{o}$dinger equation on the Jacobi space $\mathbb{S}^{J}$ if
and only if $\xi(\theta,\dot{\theta},x)=\gamma, \; \gamma\in
\mathrm{I\! R}$, and we have \begin{eqnarray}\label{eq24}
    \frac{id
\Phi(t)}{dt}&=&\frac{1}{2}Q(L)\Phi(t)+\frac{1}{2}K(L)\Phi(t).
\end{eqnarray}, and furthermore, if $K_{L}\Phi(t):=\kappa\left(\alpha(t),L\right)$ is
zero, then the  Shr$\ddot{o}$dinger equation becomes
\begin{eqnarray*}
    \frac{id
\Phi(t)}{dt}&=&\frac{1}{2}\mathcal{H}\Phi(t)\end{eqnarray*} with
$\mathcal{H}=\frac{1}{2}Q(L)$, and the Hamiltonian is given by
\begin{eqnarray*}
    \mathcal{H}(\theta,\dot{\theta},x)&=&-\lambda_{1}\log^{2}(x)-\lambda_{2}\left(2iz_{1}
    +\left(-iz_{2}+2iz_{1}\log(x)\right)^{2}\right)\\
&+&\lambda_{3}\left(2\log(x)z_{2}-4z_{1}\log^{2}(x)+i\right)+\lambda_{4}\left(-z_{2}+2z_{1}\log(x)\right)\\
&+&\gamma\log(x)-\frac{1}{4}\lambda_{5}.
\end{eqnarray*}
\end{theorem}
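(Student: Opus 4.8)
The plan is to obtain Theorem~\ref{th1} as a direct consequence of Theorem~\ref{pro4} and of the quantisation map $Q$ defined in~(\ref{eq22}); the one substantive point is identifying precisely when~(\ref{eq21}) deserves to be called a Schr$\ddot{o}$dinger equation. First I would recall, from the proof of Theorem~\ref{pro4}, how the Hermitian operators $Q(L)$ act on the wave function $\Phi(t)=\Psi(\alpha(t))$: one has $Q(F)\Phi=-\log^{2}(x)\Phi$, $Q(G)\Phi=-\bigl(2iz_{1}+(-iz_{2}+2iz_{1}\log x)^{2}\bigr)\Phi$, $Q(H)\Phi=\bigl(2z_{2}\log x-4z_{1}\log^{2}x+i\bigr)\Phi$, $Q(P)\Phi=(-z_{2}+2z_{1}\log x)\Phi$, $Q(Q)\Phi=\log(x)\,\Phi$ and $Q(R)\Phi=-\tfrac14\Phi$. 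By definition, the Schr$\ddot{o}$dinger equation associated with a \emph{fixed} element $L=\lambda_{1}F+\lambda_{2}G+\lambda_{3}H+\lambda_{4}P+\gamma Q+\lambda_{5}R$ of $\mathfrak{g}^{J}$ and with the cocycle term $K$ is
\[
\frac{id\Phi(t)}{dt}=\tfrac12 Q(L)\Phi(t)+\tfrac12 K(L)\Phi(t),\qquad K(L)=\lambda_{1}K_{F}\Phi+\lambda_{2}K_{G}\Phi+\lambda_{3}K_{H}\Phi+\lambda_{4}K_{P}\Phi+\gamma K_{Q}\Phi+\lambda_{5}K_{R}\Phi ,
\]
the crucial feature being that the weights $\lambda_{1},\dots,\lambda_{5},\gamma$ are real constants, independent of the base point $(\theta,\dot\theta)\in TS$.

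Next I would compare this template with~(\ref{eq21}). Reorganising the right-hand side of~(\ref{eq21}) according to the operators listed above, the coefficients standing in front of $Q(F),Q(G),Q(H),Q(P),Q(R)$ are exactly the constants $\lambda_{1},\dots,\lambda_{5}$, the remaining $K$-terms add up to $\tfrac12 K(L)\Phi$, and the only coefficient that is not manifestly constant is the one multiplying $Q(Q)\Phi=\log(x)\Phi$, which is the function $\xi(\theta,\dot\theta,x)$. Hence~(\ref{eq21}) is of the Schr$\ddot{o}$dinger form above if and only if this last coefficient is base-point independent, i.e. $\xi(\theta,\dot\theta,x)=\gamma$ for some $\gamma\in\mathbb{R}$; in that case~(\ref{eq21}) is precisely~(\ref{eq24}) with $L$ and $K(L)$ as above. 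I would also remark that self-adjointness of the generator $\tfrac12 Q(L)$ on $L^{2}(\mathbb{R})$ imposes no further constraint, since each $Q(\cdot)$ was built to be Hermitian in Theorem~\ref{pro4}.

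For the last assertion I would assume, in addition, that the map $K_{L}\Phi(t)=\kappa(\alpha(t),L)$ vanishes identically, which in the notation of the proof of Theorem~\ref{pro4} means $\beta_{1}=\dots=\beta_{6}=0$; then $K(L)=0$ and~(\ref{eq24}) collapses to $\frac{id\Phi(t)}{dt}=\tfrac12\mathcal{H}\,\Phi(t)$ with $\mathcal{H}=Q(L)$. Substituting the explicit actions of $Q(F),\dots,Q(R)$ and summing them with weights $\lambda_{1},\lambda_{2},\lambda_{3},\lambda_{4},\gamma,\lambda_{5}$ then gives
\[
\mathcal{H}(\theta,\dot\theta,x)=-\lambda_{1}\log^{2}(x)-\lambda_{2}\bigl(2iz_{1}+(-iz_{2}+2iz_{1}\log x)^{2}\bigr)+\lambda_{3}\bigl(2z_{2}\log x-4z_{1}\log^{2}x+i\bigr)+\lambda_{4}(-z_{2}+2z_{1}\log x)+\gamma\log(x)-\tfrac14\lambda_{5},
\]
with $z_{1}=\theta_{1}+i\dot\theta_{1}$ and $z_{2}=\theta_{2}+i\dot\theta_{2}$, as claimed.

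The only genuinely non-routine step is the middle one: one must argue that the defining property of a Schr$\ddot{o}$dinger equation is that the operator on the right-hand side be $Q$ of a single, base-point-independent element of $\mathfrak{g}^{J}$, so that the position-dependent prefactor $\xi(\theta,\dot\theta,x)$ multiplying the quantised observable $Q(Q)=\log(x)$ is admissible only when it degenerates to a real constant $\gamma$. Once this identification is made, the rest is pure bookkeeping with the formulas already proved in Theorem~\ref{pro4}; in particular the cumbersome closed form of $\xi(\theta,\dot\theta,x)$ never enters, it is simply replaced by $\gamma$.
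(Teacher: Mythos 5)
Your proposal is correct and follows essentially the same route as the paper, whose own proof is the one-line remark that one sets $\xi(\theta,\dot{\theta},x)=\gamma$ and invokes Theorem~\ref{pro4}; you simply supply the bookkeeping and the implicit criterion for what qualifies as a Schr\"{o}dinger equation (namely that the generator be $Q$ of a fixed, base-point-independent element of $\mathfrak{g}^{J}$) that the paper leaves unstated. Note only that you write $\mathcal{H}=Q(L)$ where the theorem statement says $\mathcal{H}=\tfrac{1}{2}Q(L)$; your normalisation is the one consistent with (\ref{eq24}) and with the explicit formula for $\mathcal{H}$ displayed in the theorem, so this amounts to correcting a typographical inconsistency in the statement rather than a gap in your argument.
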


\begin{proof}
By setting $\xi(\theta,\dot{\theta},x)=\gamma$, and using the
proposition \ref{pro4}, we have the result.

\end{proof}

\section{Conclusion}\label{sec6}
The aim of this work was to describe the variations of the spectral
curves of a point on the lognormal statistical manifold. Our
research question was: Is it possible to use Dombrowski's
constructions to describe the variations of a spectral curve
evolving in the Siegel Jacobi space of the lognormal statistical
manifold? to which we were able to show that we can put new
structures on the lognormal statistical manifold in order to
determine the Hamiltonian vector field flow of all spectral curves
on the Jacobi space. We showed that these variations can be
described by an equation, and under certain conditions this equation
is the Shr$\ddot{o}$dinger equation. This allowed us to determine
the mechanical energy of the evolution of the flow, known as the
Hamiltonian. We show that this Hamiltonian depends on the variable
associated with the evolution of the curves.

\section{General conclusion}\label{sec8}
The aim of this work was to describe the variations of the spectral
curves of a point on the lognormal statistical manifold. Our
research question was  is it possible to use Dombrowski's
constructions to describe the variations of a spectral curve
evolving in the Siegel-Jacobi space of the lognormal statistical
manifold? to which we were able to show that we can put new
structures on the lognormal statistical manifold in order to
determine the Hamiltonian vector field flow of all spectral curves
on the Jacobi space. We showed that these variations can be
described by an equation, and under certain conditions this equation
is the Shr$\ddot{o}$dinger equation. This allowed us to determine
the mechanical energy of the evolution of the flow, known as the
Hamiltonian. We show that this Hamiltonian depends on the variable
associated with the evolution of the curves.

 \backmatter

\bmhead{Supplementary information} This manuscript has no additional
data.

\bmhead{Acknowledgments}I would like to thank all the members of the
Mathematics Department of the University of Yaound$\acute{e}$1, and
all the members of the Algebra and Geometry Laboratory of the
University of Yaound$\acute{e}$1. We would like to thank Molitor
Mathieu \email{pergame.mathieu@gmail.com} of Instituto de
Matem$\acute{a}$tica, Universidade Federal da Bahia, Av. Adhemar de
Barros, S/N, Ondina, pergame.mathieu@gmail.com, Ondina, 40170-110,
Salvador, BA, Brazil.

\section*{Declarations}
This article has no conflict of interest to the journal. No
financing with a third party.
\begin{itemize}
\item No Funding
\item No Conflict of interest/Competing interests (check journal-specific guidelines for which heading to use)
\item  Ethics approval
\item  Consent to participate
\item  Consent for publication
\item  Availability of data and materials
\item  Code availability
\item Authors' contributions
\end{itemize}
\bibliography{bibliography}

\end{document}